\newtheorem{thm}{Theorem}[section]
\newtheorem{thmx}{Theorem}
\newtheorem{pro}{Proposition}[section]
\newtheorem{deff}{Definition}[section]
\newtheorem{lem}{Lemma}[section]
\begin{document}
\title{On the distribution of the periods of convex representations I}
\author{Abdelhamid Amroun}
%\date{2021}
\address{Laboratoire de Math\'ematiques d'Orsay, Univ. Paris-Sud, CNRS UMR 8628, 
Universit\'e Paris-Saclay, 91405 Orsay, France}
\email{abdelhamid.amroun@universite-paris-saclay.fr}

\begin{abstract} 
We prove a central limit theorem for a class of Hölder continuous cocycles with an application to stricly convex and irreducible rational representations of hyperbolic groups, introduced by Sambarino [Quantitative properties of convexe representations. Comment. Math.
Helv 89 (2014), 443-488].
\end{abstract}
\maketitle
\tableofcontents
\section{Introduction and main results}
In the paper \cite{sam1} various results on the asymptotic growth of orbital functions of $\rho(\Gamma)$ have been obtained by A. Sambarino, for a class of rationnal representations $\rho$ of hyperbolic groups $\Gamma$. The representations also include Hitchin representations of surface groups. 

In this paper, we propose a central limit type theorem for irreducible rational and strictly convex representations of hyperbolic groups. 
Let $\Gamma$ be a torsion-free discrete and cocompact subgroup of the isometry group $Isom(\tilde{M})$ of a simply connected complete manifold $\tilde{M}$ of sectional curvature $\leq -1$. The group $\Gamma$ is hyperbolic and it's boundary $\partial \Gamma$ is naturally identifiefd with the geometric boundary $\partial \tilde{M}$ of $\tilde{M}$ \cite{Ghys}. 

Let $\rho : \Gamma\longrightarrow PGL(d,\mathbb{R})$ be a strictly convex and irreducible rational representations  of $\Gamma$ (see Definition \ref{def1}).
For $g\in PGL(d,\mathbb{R})$, we define $\lambda_1 (g)$ to be the logarithm of the spectral radius of any lift of $g$ to $GL(d,\mathbb{R})$ with determinant $\pm 1$.
By the results in \cite{sam1}, we know that the numbers  $\lambda_1 (\rho(\gamma)), \gamma \in \Gamma$, are the periods of a well defined Hölder continuous cocycle $\beta :\Gamma \times \partial \Gamma \longrightarrow \mathbb{R}$ (see Sect. 2). Furthermore,  it was proved in \cite{sam1} that $\lambda_1 (\rho(\gamma))>0$ for all $\gamma \in \Gamma$, and the exponential growth rate $h_\beta$ (see  Sect. 3, Proposition \ref{P4})  of the cocycle $\beta$ is finite. We represent any conjugacy class $[\gamma]$ by a primitive $\gamma$, i.e. which can not be written as a positive power of another element of $\Gamma$. Note that, the periods $\lambda_1 (\rho(\gamma))$ depend only on the conjugacy class $[\gamma]$.
We introduce the set
\[
[\Gamma]_t=\{[\gamma]\in [\Gamma]: \lambda_1 (\rho(\gamma))\leq t\}.
\]
We have the following main results for the periods of a  strictly convex irreducible representation $\rho : \Gamma\longrightarrow PGL(d,\mathbb{R})$. 
\begin{thmx}\label{T2}
Let $\rho : \Gamma\longrightarrow PGL(d,\mathbb{R})$ be a strictly convex irreducible representation. 
There exist $h>0$, $L$ and $\sigma >0$ such that, for all $a,b \in \mathbb{R}$,
\[
hte^{-ht}\#\{[\gamma]\in [\Gamma]_t: \frac{\lambda_1 (\rho(\gamma)) -Lt}{\sigma \sqrt{t}}\in [a,b]\}
\rightarrow \mathcal{N}(a,b),
\]
as $t\rightarrow \infty$, and where we have set $\mathcal{N}(a,b) :=\frac{1}{\sqrt{2\pi}} 
\int_{a}^{b} e^{-\frac{x^2}{2}}dx$.
\end{thmx}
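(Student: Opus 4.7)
The plan is to recast the counting problem on $\Gamma$ as a problem about periodic orbits of a suspension flow over a subshift of finite type, apply the Nagaev--Guivarc'h perturbative spectral method to the associated Ruelle transfer operator, and then extract the counting asymptotics by a contour integration argument in the spirit of Parry--Pollicott and Lalley.

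The first step is to fix a Markov coding of $\Gamma$ (available in this cocompact negatively curved setting), producing a topologically mixing subshift of finite type $(\Sigma,\sigma)$ together with a strictly positive Hölder roof function $r:\Sigma\to\mathbb{R}_{>0}$ whose suspension flow models the geodesic flow on $\Gamma\backslash\tilde M$, with primitive conjugacy classes $[\gamma]$ in bijection with primitive periodic orbits. Under this coding, the Hölder cocycle $\beta$ descends to a positive Hölder function $f:\Sigma\to\mathbb{R}$ such that $\lambda_1(\rho(\gamma))=f^{n(\gamma)}(x_\gamma)$ for any periodic point $x_\gamma$ representing $[\gamma]$. By Proposition~\ref{P4}, the exponent $h=h_\beta$ is characterized as the unique zero of $s\mapsto P(-sf)$; let $\mu$ denote the associated equilibrium state. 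The natural constants in the theorem are then $L=\int r\,d\mu/\int f\,d\mu$ (so that $Lt$ is the typical $\lambda_1$-weight of an orbit whose ``other length'' is $t$) and $\sigma^2$ the asymptotic variance of the centered cocycle, delivered by the Green--Kubo formula.

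The technical heart is the spectral analysis of the perturbed family of Ruelle transfer operators
\[
\mathcal L_{-sf+i\xi(f-Lr)},\qquad (s,\xi)\in\mathbb{C}\times\mathbb{R},
\]
acting on a space of Hölder continuous functions on $\Sigma$. For $(s,\xi)$ near $(h,0)$, the Ruelle--Perron--Frobenius theorem combined with Kato analytic perturbation theory yields a simple dominant eigenvalue $\lambda(s,\xi)$ with a uniform spectral gap, satisfying $\lambda(h,0)=1$, $\partial_\xi\lambda(h,0)=0$, and $\partial_\xi^{2}\lambda(h,0)=-\sigma^{2}$. The periodic-orbit trace formula then expresses the relevant Poincaré series $\sum_{[\gamma]} e^{-s\lambda_1(\rho(\gamma))+i\xi(\lambda_1(\rho(\gamma))-Lt)}$ as a logarithmic derivative of $\det(I-\mathcal L_{-sf+i\xi(f-Lr)})$, whose singular locus is driven by the equation $\lambda(s,\xi)=1$. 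A contour-shift argument, combined with the Gaussian expansion $\lambda(h,\xi/\sqrt t)^{t}\to\exp(-\sigma^{2}\xi^{2}/2)$, then delivers the stated convergence, with the prefactor $hte^{-ht}$ precisely absorbing the prime orbit theorem for $\#[\Gamma]_t$ established from $h_\beta$.

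The main obstacle I anticipate is the non-degeneracy $\sigma^{2}>0$, which amounts to showing that $f-Lr$ is not Hölder cohomologous to a constant on $\Sigma$. By Livsic's theorem, any such cohomological relation would force a rigid linear identity between the periods $\lambda_1(\rho(\gamma))$ and the hyperbolic translation lengths $\ell(\gamma)$ across all conjugacy classes; the strict convexity and irreducibility hypotheses on $\rho$, together with a Zariski density argument in the spirit of Sambarino, are precisely what is needed to rule this out. A secondary difficulty is the uniform control of $\lambda(s,\xi)$ and of the remainder in the Gaussian expansion at the critical scale $\xi\sim t^{-1/2}$, needed to legitimately deform the contour and pass from the spectral identity to the pointwise CLT.
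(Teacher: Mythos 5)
Your overall strategy --- encode the periods as Birkhoff sums over a Markov coding and run the Nagaev--Guivarc'h spectral argument on perturbed transfer operators --- is the same underlying mathematics as the paper's, just unpacked. The paper reduces Theorem~A to a general CLT for H\"older cocycles: Ledrappier's correspondence turns the cocycle $\beta$ into a H\"older function $F$ on $T^1M$ with $\int_{[\gamma]}F=\lambda_1(\rho(\gamma))$, Sambarino's reparametrization theorem realizes these numbers as periods of a transitive Anosov flow $\psi^t$ on $\Gamma\backslash\partial^2\Gamma\times\mathbb{R}$, and then the Cantrell--Sharp central limit theorem for periodic orbits of hyperbolic flows is invoked as a black box, together with Sambarino's prime orbit theorem for the normalization $hte^{-ht}$. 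Since Cantrell--Sharp prove their theorem precisely by the symbolic-dynamics-plus-transfer-operator route you describe, you are re-deriving a quotable result; that is legitimate but costly, and you would in any case still need the same reduction steps (Ledrappier, Sambarino) to pass from $\lambda_1(\rho(\gamma))$ to Birkhoff sums.

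The genuine problem is an internal mismatch in your proposal that must be resolved before the argument can be carried out. A periodic-orbit CLT of this type is a statement about \emph{two} length functions: one used to order the orbits (the cutoff $\le t$) and one whose fluctuations are measured; non-degeneracy $\sigma^2>0$ requires exactly that the measured observable not be cohomologous to a multiple of the ordering length. Your operator $\mathcal{L}_{-sf+i\xi(f-Lr)}$ is set up for the observable $f-Lr$, i.e.\ it compares $\lambda_1(\rho(\gamma))$ with the translation length $\ell(\gamma)=r^{n}(x_\gamma)$, and your $L=\int r\,d\mu/\int f\,d\mu$ is the mean of $r$ for the measure of maximal entropy of the $f$-suspension; these data would prove a CLT for $\ell(\gamma)-Lt$ over $\{\lambda_1(\rho(\gamma))\le t\}$, or dually for $\lambda_1(\rho(\gamma))-L't$ over $\{\ell(\gamma)\le t\}$. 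But the Poincar\'e series you then write, $\sum e^{-s\lambda_1(\rho(\gamma))+i\xi(\lambda_1(\rho(\gamma))-Lt)}$, perturbs in the same direction $f$ as the ordering: it corresponds to $\mathcal{L}_{(-s+i\xi)f}$, for which the relevant variance is that of $f$ against itself in the $f$-suspension, namely zero. In that degenerate case no Gaussian limit holds: since $\#[\Gamma]_t\sim e^{ht}/(ht)$, the quantity $t-\lambda_1(\rho(\gamma))$ is asymptotically exponentially distributed over $[\Gamma]_t$, so $(\lambda_1(\rho(\gamma))-Lt)/(\sigma\sqrt{t})$ concentrates at a point when $L=1$ and escapes to infinity otherwise. (Be aware that the paper's own proof of its Theorem~3.2 applies Cantrell--Sharp with $\int_\tau f=p(\tau)$, i.e.\ to a function cohomologous to a constant, and so runs into the same degeneracy; the meaningful statement must compare $\lambda_1(\rho(\gamma))$ to a second, non-cohomologous length.) You must fix one of the two non-degenerate formulations and make the operator, the series, the constants $L,\sigma$, and the counting window consistent throughout; the contour shift and the positivity of $\sigma^2$ via the Liv\v{s}ic theorem and Zariski density, as you indicate, are then standard.
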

Let us explain the constants $h$, $L$ and $\sigma$. 
As said earlier, the numbers $\lambda_1 (\rho(\gamma))$ are the periods of a Hölder continuous cocycle $c : \Gamma \times \partial \Gamma \longrightarrow \mathbb{R}$. By the correspondance Theorem \ref{L} of Ledrappier, the cocycle $c$ is cohomologus to a Hölder continuous function $F$ with the same periods. Let $\mu$ be the measure of maximal entropy of the Anosov flow $\psi^t :\Gamma\backslash \partial \Gamma^2 \times \mathbb{R} \circlearrowleft$ obtained as the reparametrisation, by $c$ (or similarily by $F$) of the geodesic flow on $\Gamma\backslash \partial \Gamma^2 \times \mathbb{R}$ (see Sect. 3.2). Thus, applying Theorem \ref{sc} of Cantrell-Scharp (see Sect. 3.3), we obtain that $L=\int F d\mu$ and
\[
\sigma =\lim_{t\rightarrow+\infty}\int \left(\int_0^{t}F(\psi^t (x))dt-Lt \right )^2d\mu (x).
\]
The constant $h$ is the exponential growth rate of the periods of $\rho$, by the following result.
\begin{thm}[Sambarino \cite{sam1}]\label{exg}
\[
hte^{-ht}\#[\Gamma]_t\longrightarrow 1, \ as \ t\rightarrow \infty.
\]
\end{thm}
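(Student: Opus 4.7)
The plan is to reduce the counting problem for representation periods to a prime orbit theorem for a mixing hyperbolic flow, and then invoke the standard Parry--Pollicott / Margulis machinery.

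First I would use the reparametrisation set up in Section~3.2: the cocycle $\beta$ (or equivalently its cohomologous Hölder representative $F$ furnished by Ledrappier's correspondence) is used to reparametrise the geodesic flow on $\Gamma\backslash \partial\Gamma^{2}\times\mathbb{R}$, producing an Anosov-type flow $\psi^{t}$ whose primitive closed orbits are in bijection with the primitive conjugacy classes $[\gamma]\in[\Gamma]$, the period of the orbit corresponding to $[\gamma]$ being exactly $\lambda_{1}(\rho(\gamma))$. After this reduction, the statement
\[
hte^{-ht}\,\#[\Gamma]_{t}\longrightarrow 1
\]
becomes the assertion that the number $\pi(t)$ of primitive closed orbits of $\psi^{t}$ of period at most $t$ satisfies $\pi(t)\sim e^{ht}/(ht)$.

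Next I would verify the two structural inputs needed to run the prime orbit theorem. The first is that $\psi^{t}$ carries a (topologically) hyperbolic structure with a good symbolic model: the underlying geodesic flow admits Markov sections, and since $F$ is strictly positive Hölder (by positivity of the periods $\lambda_{1}(\rho(\gamma))>0$ from \cite{sam1}), the reparametrisation is coded by a suspension of a subshift of finite type under a strictly positive Hölder roof function. The second is topological mixing, equivalent to the non-arithmeticity of the length spectrum $\{\lambda_{1}(\rho(\gamma))\}$. This is where the irreducibility and strict convexity of $\rho$ enter: they rule out a common arithmetic progression for the spectral radii, because a lattice of periods would force the Zariski closure of $\rho(\Gamma)$ into a proper subvariety, contradicting irreducibility.

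With these inputs in place, I would invoke the standard thermodynamic formalism. Let $\zeta(s)=\prod_{[\tau]}(1-e^{-s\ell(\tau)})^{-1}$ be the Ruelle zeta function of $\psi^{t}$. Via Bowen's equidistribution and Ruelle's transfer operator $\mathcal{L}_{-sF}$ on Hölder functions on the symbolic space, one shows that $\zeta$ admits a meromorphic extension to a half plane $\mathrm{Re}\,s>h-\varepsilon$ with a unique simple pole at $s=h$, where $h$ is the unique zero of the pressure equation $P(-sF)=0$; this $h$ is the topological entropy of $\psi^{t}$, identified with $h_{\beta}$ via the variational principle. The mixing established in the previous step gives the non-vanishing of $\zeta$ on the line $\mathrm{Re}\,s=h$ apart from the pole. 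A Wiener--Ikehara Tauberian argument applied to the logarithmic derivative of $\zeta$ then converts this analytic information into the asymptotic $\psi(t)\sim e^{ht}$ for the Chebyshev-type counting function, and a routine partial summation upgrades this to $\pi(t)\sim e^{ht}/(ht)$, which is precisely the claim.

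The main obstacle, and the point where the hypotheses on $\rho$ are genuinely used, is establishing the non-lattice property of the period spectrum that underlies topological mixing and the absence of further poles of $\zeta$ on $\mathrm{Re}\,s=h$; everything else is a fairly standard adaptation of the Parry--Pollicott proof to the reparametrised flow, but the non-arithmeticity requires genuinely exploiting the irreducibility of the representation together with the strict convexity that ensures $\beta$ is a bona fide Hölder cocycle rather than a coboundary.
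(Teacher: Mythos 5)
First, note that the paper does not prove this statement at all: it is quoted verbatim from Sambarino's paper \cite{sam1} and used as a black box, so there is no internal proof to compare against. Your overall strategy, however, is exactly the route Sambarino takes and that the present paper's Section 3 recalls: reparametrise the translation/geodesic flow on $\Gamma\backslash\partial^2\Gamma\times\mathbb{R}$ by the positive H\"older cocycle $\beta$ (Theorem \ref{samb}), identify primitive closed orbits of $\psi^t$ with primitive conjugacy classes with period $\lambda_1(\rho(\gamma))$, and then run the Parry--Pollicott prime orbit theorem (symbolic coding, Ruelle zeta function, Wiener--Ikehara). That part of your outline is sound.

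The genuine gap is in your justification of the key hypothesis, topological weak-mixing (non-arithmeticity of the period spectrum). You assert that a lattice of periods ``would force the Zariski closure of $\rho(\Gamma)$ into a proper subvariety, contradicting irreducibility.'' This is not an argument: irreducibility of a linear representation says nothing directly about the additive subgroup of $\mathbb{R}$ generated by the numbers $\log$(spectral radius). The result you are implicitly gesturing at (density of the group generated by Jordan projections) is a theorem of Benoist that requires Zariski-density, which Theorem \ref{exg} does not assume, and is itself a substantial result rather than an immediate consequence of irreducibility. The argument actually used, and recalled in Section 3.4 of the paper, is purely topological and bypasses the representation entirely: by Preissman's theorem the geodesic flow of a compact negatively curved manifold admits no global cross-section; the existence of a cross-section is invariant under H\"older reparametrisation, so $\psi^t$ has none either; and absence of a cross-section is equivalent (Schwartzman/Plante) to the subgroup of $\mathbb{R}$ generated by the periods $\{\lambda_1(\rho(\gamma))\}$ being dense, which is precisely the non-lattice condition your zeta-function argument needs. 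A smaller quibble: you say strict convexity ensures $\beta$ ``is not a coboundary''; what is actually needed and available is the positivity of the periods $\lambda_1(\rho(\gamma))>0$ together with finiteness of $h_\beta$ (Proposition \ref{P4}), which already guarantees a strictly positive H\"older roof function after the Ledrappier correspondence.
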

Based on the work of Ledrappier \cite{Led}, one can also characterize the constant $h$ as the topological entropy of the Anosov flow $\psi^t$ (see \cite{sam1}).

The function norm $\gamma \rightarrow \|\rho (\gamma)\|$ satisfies the following central limit theorem with the same constants $h>0$, $L$ and $\sigma$ in Theorem \ref{T2}.
\begin{thmx}\label{Cor1}
Let $\rho : \Gamma\longrightarrow PGL(d,\mathbb{R})$ be a strictly convex irreducible representation. Then, for all $a,b \in \mathbb{R}$,
\[
hte^{-ht}\#\{[\gamma]\in [\Gamma]_t: \frac{\ln \|\rho(\gamma)\| -Lt}{\sigma \sqrt{t}}\in [a,b]\}\rightarrow \mathcal{N}(a,b).
\]
as $t\rightarrow \infty$.
\end{thmx}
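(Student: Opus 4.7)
\noindent\textbf{Proof proposal for Theorem~\ref{Cor1}.} The plan is to deduce Theorem~\ref{Cor1} directly from Theorem~\ref{T2} via a squeeze argument, using the fact that for a strictly convex irreducible representation, the two observables $\gamma \mapsto \lambda_1(\rho(\gamma))$ and $\gamma \mapsto \ln\|\rho(\gamma)\|$ differ by a uniformly bounded amount on $\Gamma$. The content of the argument therefore lies almost entirely in this comparison estimate.

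\smallskip

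The first step, which I expect to be the main obstacle, is the uniform bound
$$\bigl|\ln\|\rho(\gamma)\| - \lambda_1(\rho(\gamma))\bigr| \leq C, \qquad \gamma \in \Gamma \setminus \{e\},$$
with $C = C(\rho)$. Because $\rho$ is strictly convex and irreducible, every $\rho(\gamma)$ with $\gamma$ of infinite order is proximal, and the North--South dynamics on $\mathbb{P}(\mathbb{R}^d)$ forces the top singular direction and the top eigenline of $\rho(\gamma)$ to sit at projective distance bounded away from zero, uniformly in $\gamma$. Equivalently, the Cartan and Jordan projections of $\rho(\gamma)$ lie at bounded distance: this is the technical heart of the Anosov-type behaviour studied in \cite{sam1}, and I would either quote the relevant estimate from there or reprove it via compactness on the Gromov boundary $\partial \Gamma$ together with the continuity of the equivariant boundary maps associated to $\rho$.

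\smallskip

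Once this bound is in hand, write $\ln\|\rho(\gamma)\| = \lambda_1(\rho(\gamma)) + \epsilon_\gamma$ with $|\epsilon_\gamma| \leq C$, set $\delta_t := C/(\sigma\sqrt{t})$, and introduce
$$M_t(a,b) := hte^{-ht}\,\#\!\left\{[\gamma]\in [\Gamma]_t : \frac{\ln\|\rho(\gamma)\| - Lt}{\sigma\sqrt{t}} \in [a,b]\right\},$$
$$N_t(a,b) := hte^{-ht}\,\#\!\left\{[\gamma]\in [\Gamma]_t : \frac{\lambda_1(\rho(\gamma)) - Lt}{\sigma\sqrt{t}} \in [a,b]\right\}.$$
An immediate inspection gives the sandwich
$$N_t(a+\delta_t,\, b-\delta_t) \,\leq\, M_t(a,b) \,\leq\, N_t(a-\delta_t,\, b+\delta_t).$$
Fix $\eta > 0$; since $\delta_t \to 0$, one has $\delta_t < \eta$ for all $t$ large enough, so the sandwich improves to $N_t(a+\eta,\, b-\eta) \leq M_t(a,b) \leq N_t(a-\eta,\, b+\eta)$. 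Applying Theorem~\ref{T2} to the fixed intervals $[a+\eta, b-\eta]$ and $[a-\eta, b+\eta]$ yields
$$\mathcal{N}(a+\eta,\, b-\eta) \,\leq\, \liminf_{t\to\infty} M_t(a,b) \,\leq\, \limsup_{t\to\infty} M_t(a,b) \,\leq\, \mathcal{N}(a-\eta,\, b+\eta).$$
Letting $\eta \downarrow 0$ and invoking continuity of the standard Gaussian c.d.f.\ squeezes both bounds to $\mathcal{N}(a,b)$, which gives $\lim_{t\to\infty} M_t(a,b) = \mathcal{N}(a,b)$. This is the claim, and it confirms that the constants $h$, $L$, $\sigma$ must coincide with those of Theorem~\ref{T2}: a uniformly bounded perturbation $\epsilon_\gamma$ cannot affect either the exponential growth rate $h$ or the $\sigma\sqrt{t}$-scale of the CLT.
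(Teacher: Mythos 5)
Your overall strategy --- write $\ln\|\rho(\gamma)\| = \lambda_1(\rho(\gamma)) + \epsilon_\gamma$ with $\epsilon_\gamma$ uniformly bounded, observe that the normalised perturbation is then $O(1/\sqrt{t})$, and squeeze against Theorem~\ref{T2} --- is exactly the strategy of the paper, and your sandwich-and-limit step is if anything written more cleanly than the paper's version. The problem is concentrated in your ``first step''. The bound $|\ln\|\rho(\gamma)\| - \lambda_1(\rho(\gamma))| \leq C$ for \emph{all} $\gamma \in \Gamma\setminus\{e\}$ is false as stated: $\lambda_1$ is a class function but the operator norm is not, and $\|\rho(g\gamma g^{-1})\|$ can be made arbitrarily large while $\lambda_1(\rho(g\gamma g^{-1})) = \lambda_1(\rho(\gamma))$ stays fixed. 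So no amount of compactness on $\partial\Gamma$ will produce the inequality in the form you assert; the correct statement is necessarily about a \emph{chosen representative} of each conjugacy class, and even then it holds only up to finitely many exceptions.

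The repair is exactly what the paper does. By cocompactness, each class $[\gamma]$ admits a representative whose fixed points $\gamma_\pm$ are uniformly separated on $\partial\Gamma$, hence (by continuity of the equivariant maps $\xi,\eta$) with Gromov product $\exp[\gamma_-,\gamma_+] > r$ for a uniform $r>0$. By Proposition~\ref{Popsam} all but finitely many such $\rho(\gamma)$ are $(r,\epsilon)$-proximal, and for those Benoist's Proposition~\ref{ProBe} gives $|\ln\|\rho(\gamma)\| - \lambda_1(\rho(\gamma))| \leq \delta + |\mathcal{G}(\gamma_-,\gamma_+)| \leq \delta + |\log r|$, since $\log r < [\gamma_-,\gamma_+] \leq 0$. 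That is your constant $C$, valid per conjugacy class for all but finitely many classes; the exceptional classes contribute $O(te^{-ht}) \to 0$ to the normalised count (this is the content of Lemma~\ref{lem1}), so your sandwich only needs to hold up to an error that vanishes after normalisation. With the comparison estimate reformulated in this way, the rest of your argument goes through verbatim and coincides with the paper's proof.
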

Consider the Cartan decomposition $Ke^{\mathfrak{a}}K$, where $\mathfrak{a}$ is the Cartant algebra of $PGL(d,\mathbb{R})$ and $K=PO(d)$, the projectivised orthogonal group of $\mathbb{R}^d$. To each element $g\in PGL(d,\mathbb{R})$ corresponds a unique $a(g)\in \mathfrak{a}$ such that $g\in Ke^{a(g)}K$. This defines the so called Cartant projection map $g\in PGL(d,\mathbb{R}) \longrightarrow a(g)\in \mathfrak{a}$. The Jordan projection $\lambda :PGL(d,\mathbb{R}) \longrightarrow \mathfrak{a}$ is defined by
$\lambda (g)=(|\lambda_1(g)|, \cdots, |\lambda_d(g)|)$, where $|\lambda_1(g)|\geq \cdots \geq |\lambda_d(g)|$ are the modulus of the eigenvalues of any lift of $g$ to $GL(d,\mathbb{R})$ with determinant $\pm 1$. 

An irreducible representation $\rho : \Gamma \longrightarrow PGL(d,\mathbb{R})$ is said Zariski-dense if $\rho (\Gamma)$ is a Zariski-dense subgroup of $PGL(d,\mathbb{R})$. The limit cone $\mathcal{L}_G$ of a Zariski-dense subgroup $G$ of $PGL(d,\mathbb{R})$ is defined by Benoist \cite{Be} as the closed cone containing $\{\lambda (g): g\in G\}$. The limit cone $\mathcal{L}_{\rho}$ of $\rho$ is by definition the limit cone of $\rho (\Gamma)$. When the irreducible representation $\rho$ is Zariski-dense, then $\mathcal{L}_{\rho}$ is convex and  has a nonempty interior  $\mathring{\mathcal{L}_{\rho}}$ \cite{Be}.
Consequently the cone $\mathcal{L}_{\rho}$ of the Zariski-dense representation $\rho$ is convex and  has a nonempty interior. Consider it's dual cone defined by,
\[
\mathcal{L}_{\rho}^{*} =\{\varphi \in \mathfrak{a}^* : \varphi_{|\mathcal{L}_{\rho}} \geq 0\}.
\] 
By the results in \cite{sam1}, for any $\varphi \in \mathcal{L}_{\rho}^{*}$, the  real numbers $\varphi( \lambda((\rho(\gamma))))$ are the periods of a Hölder continuous cocycle $\beta_\varphi : \Gamma \times \partial \Gamma \longrightarrow \mathbb{R}$ with finite exponential growth (see Sect. 3). We introduce the set,
\[
[\Gamma]_t^{\varphi} = \{[\gamma]\in [\Gamma]: \varphi(\lambda((\rho(\gamma))) \leq t\}.
\]
We have the following results.
\begin{thmx}\label{T}
Let $\rho : \Gamma\longrightarrow PGL(d,\mathbb{R})$ be a Zariski-dense strictly convex irreducible representation and $\varphi \in \mathring{\mathcal{L}_{\rho}^{*}} $. There exist $h_\varphi >0$, $L$ and $\sigma >0$ such that,
\[
h_{\varphi}te^{-h_{\varphi}t} 
\#  \{ [\gamma]\in [\Gamma]_t^{\varphi} : \frac{\varphi( \lambda((\rho(\gamma))))-Lt}{\sigma \sqrt{n}}\in [a,b]\}
\rightarrow \mathcal{N}(a,b), 
\] as $t\rightarrow \infty$.
\end{thmx}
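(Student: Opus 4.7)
The plan is to run the same four-step scheme that establishes Theorem \ref{T2}, replacing the single coordinate $\lambda_1$ by the general linear form $\varphi$, and to handle the one genuinely new technical point -- positivity of the asymptotic variance -- by exploiting the Zariski-density hypothesis on $\rho(\Gamma)$.

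First I would invoke Sambarino's construction from \cite{sam1}: because $\varphi \in \mathring{\mathcal{L}_{\rho}^{*}}$ and $\rho(\Gamma)$ is Zariski-dense, the numbers $\varphi(\lambda(\rho(\gamma)))$ are strictly positive, and they arise as the periods of a Hölder continuous cocycle $\beta_\varphi : \Gamma\times\partial\Gamma\to\mathbb{R}$ whose exponential growth rate $h_\varphi$ is finite and strictly positive; this defines the constant $h_\varphi$ appearing in the statement. Second, Ledrappier's correspondence Theorem \ref{L} produces a Hölder function $F_\varphi$ on $\Gamma\backslash\partial\Gamma^2\times\mathbb{R}$ cohomologous to $\beta_\varphi$ and sharing the same periods. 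Using $F_\varphi$ to reparametrise the geodesic flow yields a topologically mixing Anosov flow $\psi_\varphi^t$ whose closed orbits are in period-preserving bijection with primitive conjugacy classes $[\gamma]$, and whose topological entropy equals $h_\varphi$. Let $\mu_\varphi$ be the unique measure of maximal entropy of $\psi_\varphi^t$; then I would set $L:=\int F_\varphi\,d\mu_\varphi$ and
\[
\sigma^2 := \lim_{t\to\infty}\frac{1}{t}\int\left(\int_0^t F_\varphi(\psi_\varphi^s x)\,ds - Lt\right)^2 d\mu_\varphi(x).
\]
Third, applying Theorem \ref{exg} to the cocycle $\beta_\varphi$ provides the counting asymptotic $h_\varphi t e^{-h_\varphi t}\#[\Gamma]_t^{\varphi}\to 1$, and combining this with the Cantrell-Sharp central limit Theorem \ref{sc} for the mixing Anosov flow $\psi_\varphi^t$ yields the stated convergence, by exactly the same measure-theoretic bookkeeping used to deduce Theorem \ref{T2}.

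The main obstacle will be verifying the non-degeneracy condition $\sigma>0$, i.e.\ that $F_\varphi$ is not cohomologous (over $\psi_\varphi^t$) to a constant function; equivalently, that $\beta_\varphi$ is not cohomologous to a constant multiple of the geodesic length cocycle. This is precisely where Zariski-density must intervene: by Benoist's non-arithmeticity results \cite{Be} for Jordan projections of Zariski-dense subgroups of $PGL(d,\mathbb{R})$, together with $\varphi\in\mathring{\mathcal{L}_{\rho}^{*}}$, the additive group generated by the periods $\{\varphi(\lambda(\rho(\gamma)))\}_{[\gamma]}$ is dense in $\mathbb{R}$, which rules out any Livšic-type cohomological relation between $F_\varphi$ and a constant multiple of length and hence forces $\sigma>0$. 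Once this non-degeneracy is in place, the remaining assembly -- translating the CLT for ergodic integrals of $F_\varphi$ along $\psi_\varphi^t$ into the CLT for periods counted by $[\Gamma]_t^{\varphi}$ -- is a routine adaptation of the derivation of Theorem \ref{T2}.
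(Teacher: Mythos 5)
Your proposal follows the paper's route exactly: the paper deduces Theorem \ref{T} by applying its general cocycle central limit theorem (Theorem \ref{T1}) to Sambarino's cocycle $\beta_\varphi$ whose periods are $\varphi(\lambda(\rho(\gamma)))$, and Theorem \ref{T1} is itself proved via the Ledrappier correspondence, the reparametrized translation/Anosov flow of Theorem \ref{samb}, and the Cantrell--Sharp Theorem \ref{sc} combined with the counting asymptotic of Theorem \ref{exgg} --- precisely the steps you outline. Your explicit treatment of the non-degeneracy $\sigma>0$ is a useful supplement to (not a departure from) the paper, which handles the corresponding hypothesis only implicitly through the cross-section/density-of-periods discussion preceding Theorem \ref{sc}.
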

\begin{thmx}\label{Cor2}
Let $\rho : \Gamma\longrightarrow PGL(d,\mathbb{R})$ be a Zariski-dense strictly convex irreducible representation and $\varphi \in \mathring{\mathcal{L}_{\rho}^{*}} $. Then,
\[
h_{\varphi}te^{-h_{\varphi}t} 
\#  \{ [\gamma]\in [\Gamma]_t^{\varphi} : \frac{\varphi( a((\rho(\gamma))))-Lt}{\sigma \sqrt{n}}\in [a,b]\}
\rightarrow
\mathcal{N}(a,b),
\] as $t\rightarrow \infty$, and where $h_\varphi >0$, $L$ and $\sigma$ are the constants of Theorem \ref{T}.
\end{thmx}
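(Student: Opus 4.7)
\textbf{The plan} is to deduce Theorem~\ref{Cor2} from Theorem~\ref{T} by comparing the Cartan projection $\varphi \circ a \circ \rho$ to the Jordan projection $\varphi \circ \lambda \circ \rho$. This is the same mechanism by which Theorem~\ref{Cor1} should follow from Theorem~\ref{T2}, applied in the multi-dimensional setting.

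The first step is to establish the uniform estimate
\[
\bigl|\varphi(a(\rho(\gamma))) - \varphi(\lambda(\rho(\gamma)))\bigr| \leq C_\varphi
\]
for every $\gamma \in \Gamma \setminus \{1\}$, with a fixed choice of representative in each conjugacy class $[\gamma]$, and a constant $C_\varphi$ depending only on $\rho$ and $\varphi$. For a strictly convex (projective Anosov) representation, the Cartan and Jordan projections are known to differ by a bounded amount: each $\rho(\gamma) \neq 1$ is proximal, the Cartan attractor $U_{\rho(\gamma^n)}$ converges exponentially to the attracting fixed flag of $\rho(\gamma)$, and the asymptotic expansion of $\rho(\gamma^n)$ obtained in \cite{sam1} forces $\|a(\rho(\gamma^n)) - n\lambda(\rho(\gamma))\|$ to stay bounded. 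A telescoping argument then transfers this to a uniform bound on $\|a(\rho(\gamma)) - \lambda(\rho(\gamma))\|$, and linearity of $\varphi$ yields the displayed inequality.

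Given this bound, a squeeze argument concludes the proof. Fix $\epsilon > 0$ and choose $t$ large enough that $C_\varphi/(\sigma\sqrt{t}) < \epsilon$. Setting
\[
B_t(a,b) = \left\{[\gamma]\in[\Gamma]_t^\varphi : \frac{\varphi(a(\rho(\gamma)))-Lt}{\sigma\sqrt{t}} \in [a,b]\right\}
\]
and letting $A_t(\alpha,\beta)$ denote the analogous set with $\lambda$ in place of $a$ (both counted inside the same $[\Gamma]_t^\varphi$, which depends only on $\lambda$), the comparison gives the sandwich
\[
A_t(a+\epsilon,b-\epsilon) \;\subset\; B_t(a,b) \;\subset\; A_t(a-\epsilon,b+\epsilon).
\]
Theorem~\ref{T} yields $h_\varphi t e^{-h_\varphi t}\#A_t(a\pm\epsilon, b\mp\epsilon) \to \mathcal{N}(a\pm\epsilon, b\mp\epsilon)$ as $t \to \infty$; letting $\epsilon \to 0$ afterwards and invoking continuity of the Gaussian distribution $\mathcal{N}(\cdot, \cdot)$ delivers the limit $\mathcal{N}(a,b)$ as desired.

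\textbf{Main obstacle.} The whole argument rests on the uniform comparison in the first step. The delicate point is that the Cartan projection $a(\rho(\cdot))$ is not a class function on $\Gamma$, unlike $\lambda$, so one must fix a convention for the representative of $[\gamma]$ — the natural choice being a minimiser of $\|a(\rho(\cdot))\|$ within the conjugacy class. Verifying that $C_\varphi$ can be chosen uniform over \emph{all} of $[\Gamma]$, including short conjugacy classes where the asymptotic estimates of \cite{sam1} provide no information, is the technical hurdle; it is resolved by combining the proximality of $\rho(\gamma)$ with compactness of the set of conjugacy classes of bounded period.
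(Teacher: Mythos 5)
Your second step --- the sandwich $A_t(a+\epsilon,b-\epsilon)\subset B_t(a,b)\subset A_t(a-\epsilon,b+\epsilon)$ followed by $\epsilon\to 0$ --- is exactly the paper's argument (it is the same squeeze used for Theorem \ref{Cor1}, with the $te^{-ht}$ normalization killing any finite exceptional set). The problem is your first step, which is where all the content of the theorem lives, and your justification of it does not hold up. The claim that $\|a(\rho(\gamma^n))-n\lambda(\rho(\gamma))\|$ stays bounded, followed by ``telescoping'', gives at best a constant depending on $\gamma$ (for $n=1$ the statement is just the desired inequality itself, so this is circular), and a $\gamma$-dependent constant is useless here: the entire difficulty is uniformity over $[\Gamma]$. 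Worse, boundedness in $n$ of $\|a(g^n)-n\lambda(g)\|$ already fails for a fixed $g$ with a non-semisimple Jordan block below the top eigenvalue (the middle singular values pick up a $\log n$ term), and proximality of $\rho(\gamma)$ --- which only controls the top eigenvalue --- does not rule this out. Your ``main obstacle'' paragraph also misdiagnoses where uniformity is delicate: compactness of the classes of bounded period handles nothing, since the problem is the long classes, for which one must choose a representative whose fixed points $\gamma_-,\gamma_+$ are uniformly separated.

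The paper's route supplies precisely the missing uniform comparison, and you should use it. Proposition \ref{ProBee} (Benoist) gives, for every $(r,\epsilon)$-proximal $g$, the vector estimate $\|a(g)-\lambda(g)+\mathcal{G}_{\mathfrak{a}}(g_-,g_+)\|_{\mathfrak{a}}<\delta$; hence for such $\gamma$ the difference $\varphi(a(\rho(\gamma)))-\varphi(\lambda(\rho(\gamma)))$ equals $-\varphi(\mathcal{G}_{\mathfrak{a}}(\eta(\gamma_-),\xi(\gamma_+)))$ up to $\|\varphi\|\delta$, and this Gromov-product term is uniformly bounded because each class admits a representative with $d_G(\gamma_-,\gamma_+)\geq\kappa$ and $\mathcal{G}_{\mathfrak{a}}\circ(\eta,\xi)$ is continuous on the corresponding compact set (this is the argument of Lemma \ref{lem1}). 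Proposition \ref{Popsamb} then shows that only finitely many of these representatives fail to be $(r,\epsilon)$-proximal, and Lemma \ref{lemm1} shows the $(r,\epsilon)$-proximal classes have full asymptotic count. With that in hand your squeeze goes through verbatim; without it, the proof has a genuine gap.
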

The constant $h_\varphi$ is the exponential growth rate of the periods of $\rho$, by the following result.
\begin{thm}[Sambarino \cite{sam1}]\label{exgg}
\[
h_{\varphi}te^{-h_{\varphi}t}\#[\Gamma]^{\varphi}_{t}\longrightarrow 1, \ as \ t\rightarrow \infty.
\]
\end{thm}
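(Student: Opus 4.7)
The plan is to deduce Theorem \ref{exgg} from the prime orbit theorem for topologically mixing Anosov flows, after repackaging the periods $\varphi(\lambda(\rho(\gamma)))$ as lengths of closed orbits of a reparametrized geodesic flow on $\Gamma \backslash \partial \Gamma^2 \times \mathbb{R}$. The first step is to use the framework outlined in the introduction: by the results of \cite{sam1}, the cocycle $\beta_\varphi$ is Hölder continuous with finite exponential growth rate and has the numbers $\varphi(\lambda(\rho(\gamma)))$ as its periods. Applying Ledrappier's correspondence (Theorem \ref{L}) I replace $\beta_\varphi$ by a cohomologous strictly positive Hölder function $F_\varphi$ on the unit tangent bundle, whose integrals along closed geodesics give exactly the same periods (up to cohomology, which does not affect periods).

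Second, I reparametrize the geodesic flow on $\Gamma \backslash \partial \Gamma^2 \times \mathbb{R}$ using $F_\varphi$ to obtain a Hölder continuous flow $\psi^t_\varphi$. Because the underlying geodesic flow is Anosov and $F_\varphi$ is strictly positive Hölder, $\psi^t_\varphi$ is again a topologically transitive Anosov-type flow, and there is a canonical bijection between its primitive closed orbits and primitive conjugacy classes $[\gamma] \in [\Gamma]$, with the length of the orbit corresponding to $[\gamma]$ equal to $\varphi(\lambda(\rho(\gamma)))$. Hence counting $[\Gamma]^\varphi_t$ is the same as counting primitive closed orbits of $\psi^t_\varphi$ of length $\leq t$. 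Invoking the variational characterization of pressure and the work of Ledrappier \cite{Led}, the exponential growth rate $h_\varphi$ of these periods coincides with the topological entropy of $\psi^t_\varphi$, equivalently with the unique $s$ for which the pressure $P(-sF_\varphi)$ of the geodesic flow vanishes.

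Third, and this is the main obstacle, I need to check that $\psi^t_\varphi$ is topologically \emph{mixing}, i.e. that the set of periods of $\psi^t_\varphi$ is not contained in a discrete subgroup of $\mathbb{R}$. Equivalently, the function $F_\varphi$ must be non-lattice. For Zariski-dense representations this is precisely where the hypothesis $\rho$ Zariski-dense is used: by Benoist's theorem the Jordan projections $\lambda(\rho(\gamma))$ generate a group whose image under $\varphi$ is dense in $\mathbb{R}$ for $\varphi \in \mathring{\mathcal{L}_\rho^*}$, so the length spectrum of $\psi^t_\varphi$ generates a dense subgroup. This rules out the arithmetic case and yields topological mixing of $\psi^t_\varphi$.

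Finally, with mixing established, I apply the Margulis prime orbit theorem (as extended to topologically mixing Anosov/hyperbolic flows by Parry--Pollicott) which gives
\[
\#\{\text{primitive closed orbits of } \psi^t_\varphi \text{ of length } \leq t\} \sim \frac{e^{h_\varphi t}}{h_\varphi t},
\]
which after multiplying through by $h_\varphi t e^{-h_\varphi t}$ is exactly the statement of Theorem \ref{exgg}. The verification of mixing via Zariski density is the only nontrivial ingredient; the remaining steps are the by-now standard thermodynamic formalism for Anosov flows combined with the cocycle/flow dictionary already recorded in Sect. 3.
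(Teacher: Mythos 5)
The paper offers no proof of this statement---it is quoted verbatim from Sambarino \cite{sam1}---but your outline (Ledrappier's correspondence, reparametrization of the geodesic flow into a transitive Anosov flow whose primitive closed orbits have lengths $\varphi(\lambda(\rho(\gamma)))$, non-arithmeticity/mixing via Benoist's density theorem for Zariski-dense groups, then the Margulis--Parry--Pollicott prime orbit theorem) is exactly the strategy of the cited source and of the machinery the paper assembles in Section~3. So the proposal is correct and takes essentially the same approach.
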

Leaving the details apart, we explain briefly the general ideas of the method.
In order to prove Theorems \ref{T2} and \ref{T}, we first begin by proving a central limit theorem (Theorem \ref{T1}) for  Hölder continuous cocycle $c: \Gamma \times \partial \Gamma \rightarrow \mathbb{R}$ with positive periods and finite exponential growth rate $h_c$. Under these assumptions, the periods of the cocycle $c$ are the periods of the periodic orbits of a well defined translation flow $\psi^{t}: \Gamma\backslash \partial^2 \Gamma \times \mathbb{R}\circlearrowleft$ (Definition \ref{D1}).
The flow $\psi^{t}$ is a transitive Anosov flow, which is obtained as reparametrization of the geodesic flow $\mathcal{G}^t$ by a Hölder continuous cocycle (up to a time rescaling, the orbits of $\psi^{t}$ and $\mathcal{G}^t$ are the same). We then apply a central limit theorem for periodic orbits of an Anosov flow, proved by Cantrell-Scharp (Theorem \ref{sc}). The proof of Theorems \ref{Cor2} and \ref{Cor1} combines Theorems \ref{T2} and \ref{T} with the results of Benoist (Propositions \ref{ProBe} and \ref{ProBee}) and Sambarino (Proposition \ref{Popsam} and \ref{Popsamb}). 
\section{Strictly convex representations}
In this section we define rational representation of the group $\Gamma$.
We begin with the definition of strictly convex representations.
Let $Gr_{d-1}(\mathbb{R}^d)$ be the Grassmannian of hyperplanes of $\mathbb{R}^d$.
\begin{deff}[Sambarino \cite{sam1} \cite{sam2}]\label{def1}
An irreducible representation $\rho : \Gamma\longrightarrow PGL(d,\mathbb{R})$  of the group $\Gamma$ is strictly convex if there exists a $\rho$-equivariant Hölder continuous maps
\[
\xi :\partial \Gamma \longrightarrow \mathbb{P} (\mathbb{R}^d), \ and \ \eta :\partial \Gamma \longrightarrow Gr_{d-1}(\mathbb{R}^d),
\]
such that $\mathbb{R}^d = \xi (x) \oplus \eta (y)$ whenever $x\ne y$.
\end{deff}
By the results in \cite{sam1, sam2}, the equivariant maps $\xi$ and $\eta$ are then unique.
Examples of strictly convex irreducible representations are the Hitchin representations of surface groups.
A Hitchin representation is an element in the connected component of $Hom (\Gamma, PSL(d, \mathbb{R}))$ containing a Fuchsian representation. This means that a Hitchin representation can be continuously deformed to a Fuchsian representation. A representation $\Gamma \longrightarrow PSL(d, \mathbb{R})$ is Fuchsian if it factors as 
\[
\Gamma\hookrightarrow PSL(2, \mathbb{R}) \longrightarrow PSL(d, \mathbb{R}),
\]
where the first arrow is the canonical injection and the second is the unique (up to a conjugacy) irreducible representation. When a Fuchsian representation satisfies the condition of proximality, then it is strictly convex representation (see \cite{sam2}). Moreover, the composition of a Zariski dense Hitchin representation of $\Gamma$ followed by some irreducible represenation
\[
\Gamma \longrightarrow  PGL(d, \mathbb{R}) \longrightarrow PGL(k, \mathbb{R})
\] is stricly convex.
%For more examples we refer to \cite{sam1} and the references there, in particular \cite{ben2} and \cite{koz}.

Let $G$ be a connected real semisimple algebraic Lie group and $P$ a minimal parabolic subgroup of $G$. The homogenuous space $\mathcal{F}:=G/ P$ is identified with the Furstenberg boundary $\partial_{\infty}X :=K/M$ of the Riemannian symetric space $X=G/K$ of $G$, where $K$ is a (maximal) isotopy subgroup of $G$ (see \cite{wa}). The space $X$ has a nonpositive curvature and $\partial_{\infty}X$ is the visual boundary at infinity of $X$.
The parabolic subgroup $P$ is the stabilizer in $G$ of some point in $\partial_{\infty} X$. The space $\mathcal{F}:=G/ P$ is then the unique open $G$-orbit of this point. 
\begin{deff}[Sambarino \cite{sam2}]\label{def2}
A representation $\rho : \Gamma\longrightarrow G$  of the group $\Gamma$ is said hyperconvex if it is irreducible and admits a Hölder continuous equivariant map $\zeta : \partial\Gamma \longrightarrow \mathcal{F}$ such that whenever $x\ne y \in \partial\Gamma$ one has that the pair $(\zeta (x), \zeta (y))$ represents an open $G$-orbit in the product space $\mathcal{F} \times \mathcal{F}$.
\end{deff}
Let $\rho : \Gamma \longrightarrow G$ is a Zariski dense hyperconvex representation and $\Lambda : G \longrightarrow PGL (d, \mathbb{R})$ is a proximal irreducible representation. Then, the composition $\Lambda \circ \rho : \Gamma \longrightarrow PGL (d, \mathbb{R})$ is a stricly convex representation  (\cite{sam2} Lemma 7.1). One can then apply the main results for the representation $\Lambda \circ \rho$.

Flags in general position represent a particular interesting examples of open $G$-orbits. For example, when $G=PGL(d, \mathbb{R})$, $\mathcal{F}$ is the space of complete flags of $\mathbb{R}^d$ with stabilizer, the projectivised of the group $M$ of diagonal matrices with $\pm 1$ on the diagonal. Recall that a complete flag is given by an increasing sequence of subspaces,
\[
\{0\}=V_0 \subset V_1 \subset \cdots \subset V_d=\mathbb{R}^d,
\]where $dim V_i =i$ for all $i$.
We say that two flags $(V_i)_{i=1}^{d}$ and $(W_i)_{i=1}^{d}$ are in general position if, for all $i=1, \cdots, d$, we have $V_i \cap W_{d-i}=\{0\}$. Then, the set of flags in general position is precisely a $G$-open orbit in the product space $\mathcal{F} \times \mathcal{F}$.

Suppose that $\Gamma$ is the fundamental group of a  compact hyperbolic surface $\mathbb{H}^2 /\Gamma$. The boundary of the surface, and hence of $\Gamma$, is identified with $PGL(2, \mathbb{R})/P=\mathbb{P}(\mathbb{R}^2)$, where $P$ be the parabolic subgroup of $PGL(2, \mathbb{R})$ of upper-triangular matrices. Consider a Fuchsiann representation $\Gamma \hookrightarrow PSL(2, \mathbb{R}) \longrightarrow PSL(d, \mathbb{R})$. Then by (\cite{sam3} Corollary 3.3) there exists a Hölder continuous equivariant map $\zeta : \partial \Gamma \longrightarrow \mathcal{F}$ (the space of complete flags in $\mathbb{R}^d$) such that $\zeta (x)$ and $\zeta (y)$ are in general position, for all $x\ne y$. More generally, by a result of Labourie \cite{lab}, this also holds for Hitchin representations $\rho : \Gamma \longrightarrow PSL(d, \mathbb{R})$ (i.e. $\rho$ is in the same connected component of the Fuchsiann representation $\Gamma \hookrightarrow PSL(2, \mathbb{R}) \longrightarrow PSL(d, \mathbb{R})$).

Fix a strictly convex representation $\rho : \Gamma\longrightarrow PGL(d,\mathbb{R})$ and the corresponding $\rho$-equivariant map $\xi$ ( Definition \ref{def1}). There is a dynamical description of the periods of $\rho$, introduced by Sambarino \cite{sam1}, in terms of periodic orbits of some special flow (see Proposition \ref{P4}). This flow is obtained as a reparametrization of the geodesic flow by means of a Hölder continuous cocycle $\beta : \Gamma \times \partial \Gamma \longrightarrow \mathbb{R}$ defined as follows:
\[
\beta (\gamma, x) =\log \frac{\|\rho (\gamma)v\|}{\|v\|},
\]
where, by abuse of notation, we also denote by $\rho (\gamma)$ it's lift to $GL(d,\mathbb{R})$ with determinant $\pm 1$, and $v=v(x)$ is any vector of the projective line $\xi(x)$ (Definition \ref{def1}). 

Recall that a linear map of a finite dimensional vector space is proximal, if it has a unique complex eigenvalue of maximal modulus and its generalized eigenspace is one dimensional.

\begin{pro}[Sambarino \cite{sam1}]\label{P4}
Let $\rho : \Gamma\longrightarrow PGL(d,\mathbb{R})$ be a strictly convex representation. We have,
\begin{enumerate}
\item For all $\gamma \in \Gamma$, $\rho (\gamma)$ is proximal and $\xi (\gamma_+)$ is it's attractive projective line.
\item The periods of $\beta$ are positives and given by $\beta (\gamma, \gamma_{+})=\lambda_1 (\rho(\gamma))$, where $\exp \lambda_1 (\gamma)$ is the unique maximal eigenvalue of (a lift) $\rho (\gamma)$.
\item The exponential growth rate of $\beta$ is finite,
\[
h_\beta := \limsup_{t \longrightarrow \infty}
\frac{ \log \# \{ [\gamma]\in [\Gamma]: \beta(\gamma, \gamma_{+}) \leq t \} }{t} 
<\infty.
\]
\end{enumerate}
\end{pro}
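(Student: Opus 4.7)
For part (1), the strategy is to convert the well-known north--south dynamics of $\gamma$ on $\partial\Gamma$ into spectral information about $\rho(\gamma)$. Every nontrivial $\gamma\in\Gamma$ is loxodromic on $\partial\Gamma$, with attracting/repelling fixed points $\gamma_\pm$. Equivariance makes $\xi(\gamma_+)$ and $\eta(\gamma_-)$ both $\rho(\gamma)$-invariant, and Definition~\ref{def1} furnishes the invariant splitting $\mathbb{R}^d=\xi(\gamma_+)\oplus\eta(\gamma_-)$; denote the associated block decomposition by $\mu\oplus A$, where $\mu$ is the scalar on $\xi(\gamma_+)$. For any $x\ne\gamma_-$, $\gamma^n x\to\gamma_+$, so continuity of $\xi$ gives
\[
\rho(\gamma)^n\xi(x)=\xi(\gamma^n x)\longrightarrow \xi(\gamma_+)\qquad\text{in }\mathbb{P}(\mathbb{R}^d).
\]
Writing $v\in\xi(x)$ as $v_L+v_H$ along the invariant splitting, projective convergence translates to $\|A^n v_H\|/|\mu|^n\to 0$. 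Irreducibility of $\rho$ forces the span of $\{\xi(x):x\in\partial\Gamma\}$ to equal $\mathbb{R}^d$, so the corresponding $v_H$'s form a spanning set of $\eta(\gamma_-)$; if $A$ had an eigenvalue of modulus $\ge|\mu|$, then for generic $v_H$ the ratio $\|A^n v_H\|/|\mu|^n$ would be bounded away from $0$, contradicting the above. Hence the spectral radius of $A$ is strictly less than $|\mu|$, which is precisely proximality with attracting line $\xi(\gamma_+)$.

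For part (2), proximality yields that $\rho(\gamma)$ acts on $\xi(\gamma_+)$ by a real scalar $\mu$ with $|\mu|=e^{\lambda_1(\rho(\gamma))}$, and substituting into the definition of $\beta$ gives $\beta(\gamma,\gamma_+)=\log|\mu|=\lambda_1(\rho(\gamma))$. For the positivity $\lambda_1>0$, normalise the lift so $|\det|=1$ and list the eigenvalue moduli $|\mu_1|>|\mu_2|\ge\cdots\ge|\mu_d|$, the strict first inequality being proximality. If we had $|\mu_1|\le 1$, then $|\mu_j|<1$ for every $j\ge 2$, hence
\[
1=|\mu_1|\cdot|\mu_2|\cdots|\mu_d|<|\mu_1|\cdot|\mu_1|^{d-1}\le 1,
\]
a contradiction; thus $\lambda_1(\rho(\gamma))>0$.

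For part (3), the idea is to dominate the $\beta$-counting by the classical counting of closed geodesics. The Ledrappier-type correspondence used later in the paper exhibits $\beta$ as cohomologous to a Hölder function $F$ on the compact space $\Gamma\backslash\partial^2\Gamma\times\mathbb{R}$ with the same periods; all periods being positive by part (2), a Livšic-type regularization on this compact transitive flow allows $F$ to be chosen strictly positive, so $F\ge\delta>0$. The $\beta$-period attached to $[\gamma]$ is then $\int_0^{\ell(\gamma)}F\circ\mathcal{G}^s\,ds\ge\delta\,\ell(\gamma)$, where $\ell(\gamma)$ is the length of the closed geodesic indexed by $[\gamma]$. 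Therefore
\[
\#\{[\gamma]:\beta(\gamma,\gamma_+)\le t\}\le\#\{[\gamma]:\ell(\gamma)\le t/\delta\},
\]
and the right-hand side grows at most exponentially in $t$ by the Margulis-type counting of closed geodesics on a compact manifold of negative curvature. This yields $h_\beta\le h_{\mathrm{top}}(\mathcal{G})/\delta<\infty$.

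\emph{Main obstacle.} Part (1) carries the real geometric content: one must upgrade a topological transversality condition on the boundary maps $(\xi,\eta)$ into genuine spectral domination for $\rho(\gamma)$. The subtle point is that pointwise convergence along the Hölder curve $\xi(\partial\Gamma)$ is a priori weaker than such domination; closing the gap requires irreducibility of $\rho$ in an essential way, to guarantee that $\xi(\partial\Gamma)$ is not confined in an $A$-invariant proper subspace so that its transverse projections sweep out all of $\eta(\gamma_-)$. Once proximality is in hand, part (2) is a one-line spectral computation together with a determinant parity check, and part (3) reduces to a standard geodesic counting estimate.
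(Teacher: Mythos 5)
This proposition is quoted from Sambarino's paper and is not proved in the present text, so there is no internal proof to compare against; I am judging your argument on its own merits. Your parts (1) and (2) are correct and follow the standard route (equivariance plus the transversality $\mathbb{R}^d=\xi(\gamma_+)\oplus\eta(\gamma_-)$ give an invariant splitting; north--south dynamics plus irreducibility, via the fact that $\mathrm{span}\,\xi(\partial\Gamma)$ is $\rho(\Gamma)$-invariant and hence all of $\mathbb{R}^d$, force the spectral radius of the block on $\eta(\gamma_-)$ to be strictly smaller; the determinant-one normalization then gives $\lambda_1>0$). The only point worth spelling out in (1) is the claim that $\|A^nv_H\|/|\mu|^n$ is bounded away from $0$ for $v_H$ with nonzero component in the sum $E_{\geq}$ of generalized eigenspaces of modulus $\geq|\mu|$; this requires a short Jordan-block estimate, but it is routine.

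Part (3) has a genuine gap. You pass from ``every period $\int_\tau F$ is positive'' to ``$F$ may be replaced, up to coboundary, by a function bounded below by some $\delta>0$.'' That implication is false in general for Hölder functions over a transitive Anosov flow: being cohomologous to a strictly positive function is equivalent to $\inf_\mu\int F\,d\mu>0$ over all invariant probability measures, equivalently to the \emph{uniform} bound $\int_\tau F\geq\delta\,\ell(\tau)$ over periodic orbits, and pointwise positivity of the periods does not rule out $\int_\tau F/\ell(\tau)\to 0$ along a sequence of orbits (the infimum can be attained by a non-periodic minimizing measure). Since the lower bound $\lambda_1(\rho(\gamma))\geq\delta\,\ell(\gamma)-C$ is exactly what your counting inequality needs, your argument is circular at the decisive step. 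Closing it requires genuinely quantitative input about strictly convex representations: e.g.\ that $\rho$ is a quasi-isometric embedding, so that $\log\|\rho(\gamma)\|\geq c\,|\gamma|-C$ (a compactness/uniform-transversality argument using the maps $\xi,\eta$ on $\partial^2\Gamma$), combined with the $(r,\epsilon)$-proximality comparison of Proposition \ref{ProBe} and Proposition \ref{Popsam} to transfer this lower bound from $\log\|\rho(\gamma)\|$ to $\lambda_1(\rho(\gamma))$. This is the content of Sambarino's proof of finiteness of $h_\beta$, and it cannot be replaced by a soft Livšic-positivity appeal.
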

On the other hand, there is a way to distinguish the $\gamma$'s for which $\rho (\gamma)$ is not proximal. This is done by the so called $(r, \epsilon)$-proximality condition introduced by Benoist \cite{Be}.
For this, recall that the Gromov product is the map $[\cdot, \cdot] : \partial \Gamma \times \partial \Gamma \longrightarrow \mathbb{R}$ defined by: 
\[
[x,y]=\log\frac{|\theta(v)|}{\|\theta\|\|v\|},
\]
where $\theta \in \eta (y)$, $v\in\xi(x)$. 
We set in what follows,
\[
\mathcal{G}(\theta, v)=\log\frac{|\theta(v)|}{\|\theta\|\|v\|},
\]
for all $(\theta, v) \in\mathbb{P}(\mathbb{R}^{d*})\times \mathbb{P}(\mathbb{R}^{d})- \{(\theta, v) : \theta (v) =0\}$.
\begin{deff}[Benoist \cite{Be}]\label{DeBen}
A linear transformation $g\in PGL(d, \mathbb{R})$  is $(r, \epsilon)$-proximal for some $r>0$ and $\epsilon >0$ if it is proximal, $\exp \mathcal{G}(g_{-}, g_{+}) >r$ and the complement of an $\epsilon$-neighborhood of $g_{-}$ is sent by $g$ to an $\epsilon$-neighborhood of $g_{+}$.  
\end{deff}
In the definition, $g_{+}$ is the attractive line (by proximality) for $g$, corresponding to the eigenvalue $\exp \lambda_1 (g)$, and $g_{-}$ the repelling hyperplane. Further, if we set $g_{+}=vect (v)$ and $g_{-}= Ker (\theta)$ then
\[
\mathcal{G}(g_{-}, g_{+})=\log\frac{|\theta(v)|}{\|\theta\|\|v\|}.
\]
The following results are two important consequences of this definition.
\begin{pro}[Benoist \cite{Be}]\label{ProBe}
Let $r>0$ and $\delta >0$. Then there exists $\epsilon >0$ such that for every $(r, \epsilon)$-proximal transformation $g$ one has
\[
\left | \log \|g\|-\lambda_1 (g) +  \mathcal{G}(g_{-}, g_{+})\right | <\delta.
\]
\end{pro}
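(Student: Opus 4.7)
The plan is to exploit the $g$-invariant splitting $\mathbb{R}^d=g_+\oplus g_-$ from proximality, together with the quantitative control provided by $(r,\epsilon)$-proximality. Choose unit representatives $v\in g_+$ and $\theta\in(\mathbb{R}^d)^*$ with $\|v\|=\|\theta\|=1$ and $\ker\theta=g_-$, so that $\mathcal{G}(g_-,g_+)=\log|\theta(v)|$. Let $\pi\colon\mathbb{R}^d\to g_+$ denote the projection along $g_-$; then $\pi(u)=(\theta(u)/\theta(v))\,v$ and $\|\pi\|=1/|\theta(v)|=\exp(-\mathcal{G}(g_-,g_+))$. Since $g$ preserves each summand and acts on $g_+$ by the scalar $e^{\lambda_1(g)}$, the identity $gu=e^{\lambda_1(g)}\pi(u)+g(u-\pi(u))$ with $u-\pi(u)\in g_-$ gives
\[
\Bigl|\,\|g\|-e^{\lambda_1(g)}\|\pi\|\,\Bigr|\le \|g|_{g_-}\|\cdot\|I-\pi\|.
\]

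The main step is to show that $(r,\epsilon)$-proximality forces $\|g|_{g_-}\|\le C(r)\,\epsilon\,e^{\lambda_1(g)}$. To see this, fix a unit $w\in g_-$ and consider the family $u_t=v+tw$. As $t$ grows, $[u_t]\in\mathbb{P}(\mathbb{R}^d)$ approaches $g_-$; because $|\theta(v)|>r$, an elementary geometric estimate yields a lower bound $t^*\ge c(r)/\epsilon$ for the largest $t$ keeping $[u_t]$ outside the $\epsilon$-neighbourhood of $g_-$. For this $t^*$ the $(r,\epsilon)$-condition places $[g u_{t^*}]=[e^{\lambda_1(g)}v+t^*\,g(w)]$ inside the $\epsilon$-neighbourhood of $[v]$, which forces the transverse component $t^*g(w)$ to have norm at most $O(\epsilon)\,e^{\lambda_1(g)}$. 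Dividing by $t^*$ delivers the claimed linear estimate.

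Substituting this into the first display and noting that $\|I-\pi\|/\|\pi\|$ stays bounded as long as $|\theta(v)|>r$, one obtains $\|g\|/(e^{\lambda_1(g)}\|\pi\|)=1+O(\epsilon)$ uniformly over $(r,\epsilon)$-proximal $g$. Taking logarithms and using $\log\|\pi\|=-\mathcal{G}(g_-,g_+)$ produces the desired inequality once $\epsilon$ is chosen small enough in terms of $r$ and $\delta$. The hard part is exactly the middle step: one has to convert the \emph{projective} statement encoded by $(r,\epsilon)$-proximality into a \emph{linear} norm estimate on a $g$-invariant hyperplane. The hypothesis $\exp\mathcal{G}(g_-,g_+)>r$ is indispensable there, since it keeps the splitting $g_+\oplus g_-$ uniformly transverse and prevents both the constant $c(r)$ in the projective calculation and the operator norm $\|\pi\|=1/|\theta(v)|$ from degenerating.
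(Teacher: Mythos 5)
The paper does not prove this proposition at all: it is quoted verbatim from Benoist's \emph{Propri\'et\'es asymptotiques des groupes lin\'eaires}, so there is no internal argument to compare against. Your reconstruction is correct and follows the standard route (the one Benoist himself takes): split $\mathbb{R}^d=g_+\oplus g_-$, identify $\|\pi\|=e^{-\mathcal{G}(g_-,g_+)}$, and reduce everything to the bound $\|g|_{g_-}\|\le C(r)\,\epsilon\, e^{\lambda_1(g)}$. That middle step does check out when written in full: with $u_{t}=v+tw$ one has $d([u_t],\mathbb{P}(g_-))\ge r/(1+t)$, so $t^*=r/(2\epsilon)$ is admissible, and then $\epsilon\ge d([gu_{t^*}],[v])\ge \dfrac{r\,t^*\|gw\|}{e^{\lambda_1(g)}+t^*\|gw\|}$ because $gw\in g_-$ makes angle at least $\arcsin r$ with $v$; for $\epsilon<r/2$ this forces $t^*\|gw\|\le e^{\lambda_1(g)}$ and hence $\|gw\|\le 4\epsilon^2 e^{\lambda_1(g)}/r^2$. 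The only points worth making explicit in a final write-up are exactly these two: the lower bound on the angle between $g(w)$ and $v$ (which is where $\exp\mathcal{G}(g_-,g_+)>r$ enters a second time, beyond controlling $\|\pi\|$ and $\|I-\pi\|$), and the disposal of the denominator $\|e^{\lambda_1(g)}v+t^*gw\|$ before dividing by $t^*$.
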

\begin{pro}[Sambarino \cite{sam1}]\label{Popsam}
Let $\rho : \Gamma\longrightarrow PGL(d,\mathbb{R})$ be a strictly convex representation. Fix $r>0$ and $\epsilon >0$. Then the following set is finite:
\[
\{\gamma \in \Gamma : \exp ([\gamma_{-}, \gamma_{+}])>r \ and \ \rho (\gamma) \ is \ not \ (r, \epsilon)-proximal
\}.
\]
%\end{pro}
where, $[\gamma_{-}, \gamma_{+}]=\log\frac{|\theta(v)|}{\|\theta\|\|v\|}$ with $\xi (\gamma_+)=vect (v)$ and $\eta (\gamma_{-})=\theta$,
\end{pro}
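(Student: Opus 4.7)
The plan is first to isolate which part of the $(r,\epsilon)$-proximality condition of Definition \ref{DeBen} can actually fail. By Proposition \ref{P4}, every $\rho(\gamma)$ is proximal with $g_+ = \xi(\gamma_+)$ and $g_- = \eta(\gamma_-)$; thus condition~(1) is automatic. Using the notation in the last line of the statement, the Benoist quantity $\mathcal{G}(g_-, g_+)$ is exactly $[\gamma_-, \gamma_+]$, so the hypothesis $\exp[\gamma_-, \gamma_+] > r$ is condition~(2). What remains is the contraction condition~(3): the complement of an $\epsilon$-neighborhood of $\eta(\gamma_-)$ must be mapped by $\rho(\gamma)$ into an $\epsilon$-neighborhood of $\xi(\gamma_+)$. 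One must show this fails for only finitely many $\gamma$ satisfying the spread hypothesis.

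I would argue by contradiction. Suppose a sequence of distinct elements $(\gamma_n) \subset \Gamma$ satisfies the spread hypothesis but fails condition~(3). Since $\Gamma$ is cocompact and so has finite word balls, distinctness forces $|\gamma_n|_\Gamma \to \infty$. Compactness of $\partial \Gamma$ lets one extract a subsequence with $\gamma_n^\pm \to y_\pm \in \partial \Gamma$; the Gromov product bound, continuity of $\xi$ and $\eta$, and the transversality $\mathbb{R}^d = \xi(x) \oplus \eta(y)$ for $x \neq y$ from Definition \ref{def1} force $y_- \neq y_+$, so $\xi(\gamma_n^+) \to \xi(y_+)$ and $\eta(\gamma_n^-) \to \eta(y_-)$ transversely. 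The failure of~(3) yields points $p_n \in \mathbb{P}(\mathbb{R}^d)$ with $d(p_n, \eta(\gamma_n^-)) \geq \epsilon$ and $d(\rho(\gamma_n) p_n, \xi(\gamma_n^+)) \geq \epsilon$; a further extraction gives $p_n \to p_\infty$ and $\rho(\gamma_n) p_n \to q_\infty$ with $p_\infty$ at distance $\geq \epsilon/2$ from $\eta(y_-)$ and $q_\infty$ at distance $\geq \epsilon/2$ from $\xi(y_+)$. The objective becomes to show $\rho(\gamma_n) p_n \to \xi(y_+)$, contradicting the bound on $q_\infty$.

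To obtain this, I would combine two ingredients. First, $\rho$-equivariance gives $\rho(\gamma) \xi(x) = \xi(\gamma x)$, and the hyperbolic group $\Gamma$ acts as a convergence group on $\partial \Gamma$, so for the sequence $\gamma_n$ with $\gamma_n^\pm \to y_\pm$ and $|\gamma_n|_\Gamma \to \infty$ one has $\gamma_n x \to y_+$ uniformly on compact subsets of $\partial \Gamma \setminus \{y_-\}$. Hence $\xi(\gamma_n x) \to \xi(y_+)$ for all such $x$, which handles the case where $p_\infty$ lies in the limit set $\xi(\partial \Gamma)$. Second, for general $p_\infty$ off $\eta(y_-)$, I would decompose $p_\infty$ along the direct sum $\xi(y_+) \oplus \eta(y_-)$ and invoke the spectral gap: since $|\gamma_n|_\Gamma \to \infty$, one has $\lambda_1(\rho(\gamma_n)) \to \infty$, and for strictly convex $\rho$ the limit cone sits in the interior of the positive Weyl chamber, giving a uniform bound $\lambda_1(\rho(\gamma)) - \lambda_2(\rho(\gamma)) \geq c\,\lambda_1(\rho(\gamma))$; the second singular value of $\rho(\gamma_n)$ is therefore exponentially smaller than the first, so the $\eta(y_-)$-component of $p_\infty$ is crushed while the $\xi(y_+)$-component is amplified, and $\rho(\gamma_n) p_n \to \xi(y_+)$.

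The main obstacle is precisely this uniform spectral-gap estimate $\lambda_1 - \lambda_2 \gtrsim \lambda_1$: it is the place where \emph{strict} convexity, rather than mere proximality of each $\rho(\gamma)$, is essential, and it is a genuine input drawn from transversality of $\xi, \eta$ together with Benoist's estimates \cite{Be} as exploited by Sambarino in \cite{sam1,sam2}. Once this gap estimate and the convergence-action description of $\Gamma \curvearrowright \partial \Gamma$ are in hand, the two ingredients combine to close the contradiction.
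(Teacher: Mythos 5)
The paper itself offers no proof of this proposition --- it is quoted verbatim from Sambarino \cite{sam1} --- so I can only judge your argument on its own terms. Your initial reduction is correct and well organized: by Proposition \ref{P4} every $\rho(\gamma)$ is proximal with attracting line $\xi(\gamma_+)$ and repelling hyperplane $\eta(\gamma_-)$, the hypothesis $\exp[\gamma_-,\gamma_+]>r$ is exactly condition (2) of Definition \ref{DeBen}, so only the contraction condition (3) can fail; the contradiction/compactness set-up and the passage of the transversality $\xi(y_+)\oplus\eta(y_-)=\mathbb{R}^d$ to the limit via the Gromov-product lower bound are all fine.

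The genuine gap is in your ``second ingredient,'' which is where all the content lies (your first ingredient only treats $p_\infty\in\xi(\partial\Gamma)$, a thin subset of $\mathbb{P}(\mathbb{R}^d)$ once $d>2$). You pass from an eigenvalue gap $\lambda_1(\rho(\gamma_n))-\lambda_2(\rho(\gamma_n))\geq c\,\lambda_1(\rho(\gamma_n))$ to the assertion that ``the second singular value of $\rho(\gamma_n)$ is therefore exponentially smaller than the first,'' and from there to the contraction $\rho(\gamma_n)p_n\to\xi(y_+)$. Neither implication holds for individual matrices: a gap between the two largest eigenvalue moduli gives no control on $\sigma_2(g)/\sigma_1(g)$, nor on the operator norm of $\rho(\gamma_n)$ restricted to the invariant hyperplane $\eta(\gamma_n^-)$ --- only the spectral radius of that restriction equals $e^{\lambda_2}$, while its norm can be comparable to $e^{\lambda_1}$ for a badly conditioned eigenbasis, which is exactly the situation the proposition is meant to exclude. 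What the argument actually needs is the uniform singular-value gap $\sigma_2(\rho(\gamma))/\sigma_1(\rho(\gamma))\to 0$ as $|\gamma|_\Gamma\to\infty$ (the projective Anosov content of strict convexity, which Sambarino derives from the transversality of $\xi$ and $\eta$), combined with Benoist's quantitative lemma comparing the Cartan attractor and repeller of $\rho(\gamma)$ with $\xi(\gamma_+)$ and $\eta(\gamma_-)$; the hypothesis $\exp[\gamma_-,\gamma_+]>r$ enters to keep the projection onto $\xi(\gamma_+)$ along $\eta(\gamma_-)$ uniformly bounded. Since you explicitly defer this estimate to \cite{Be} and \cite{sam1} and label it ``the main obstacle,'' the proposal reduces the proposition to its own essential content rather than proving it, and the bridge you do sketch (eigenvalue gap $\Rightarrow$ singular-value gap $\Rightarrow$ contraction) is not valid as stated.
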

Using these results we deduce the following lemma.
\begin{lem}\label{lem1}
Let $(r, \epsilon)$ as in Proposition \ref{Popsam}. Then,
\[
hte^{-h t}\#\{[\gamma]\in [\Gamma]_t:  \ \rho (\gamma) \ is \ (r,\epsilon)-proximal\}
 \longrightarrow 1, \ as \ t\rightarrow \infty.
\]
\end{lem}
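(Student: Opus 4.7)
By Theorem~\ref{exg} we have $\#[\Gamma]_t \sim e^{ht}/(ht)$, so it suffices to prove that
\[
\#\bigl\{[\gamma] \in [\Gamma]_t : \rho(\gamma) \text{ is not } (r,\epsilon)\text{-proximal}\bigr\} = o(\#[\Gamma]_t).
\]
By Proposition~\ref{P4}, every $\rho(\gamma)$ is already proximal, so failure of $(r,\epsilon)$-proximality (Definition~\ref{DeBen}) splits into two cases: either the gap condition $\exp [\gamma_-, \gamma_+] > r$ holds but the neighborhood/contraction condition fails, or the gap condition itself fails. I would split the exceptional set accordingly as $A_t \sqcup B_t$, where $A_t$ collects the classes of the first type and $B_t$ the classes of the second type, and then bound each piece separately.

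The set $A_t$ is handled directly by Proposition~\ref{Popsam}: the set $\{\gamma \in \Gamma : \exp[\gamma_-, \gamma_+] > r,\ \rho(\gamma)\text{ not }(r,\epsilon)\text{-proximal}\}$ is already finite in $\Gamma$, and projecting to $[\Gamma]$ preserves finiteness, so $\#A_t = O(1)$, which is $o(\#[\Gamma]_t)$ by Theorem~\ref{exg}. For $B_t$ I would exploit that the condition $\exp[\gamma_-, \gamma_+] \leq r$ is sensitive to the representative chosen from the conjugacy class: replacing $\gamma$ by $g \gamma g^{-1}$ preserves $\lambda_1(\rho(\gamma))$ (hence preserves membership in $[\Gamma]_t$) but moves the attractive/repelling data to $(g\gamma_-, g\gamma_+)$. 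Since $\Gamma$ acts minimally on $\partial^2\Gamma$ (cocompactly) and $(x,y)\mapsto [x,y]$ is continuous and unbounded above away from the diagonal, each $\Gamma$-orbit on $\partial^2\Gamma$ intersects the open set $\{\exp[x,y]>r\}$. Selecting representatives accordingly, $B_t$ is absorbed into a set already controlled by Proposition~\ref{Popsam}, and again contributes $O(1)$.

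Combining the two bounds gives $\#A_t + \#B_t = O(1) = o(\#[\Gamma]_t)$, so the $(r,\epsilon)$-proximal count is asymptotic to $\#[\Gamma]_t$, and the lemma follows from Theorem~\ref{exg}. The main obstacle is ensuring that the change-of-representatives step is legitimate in the sense needed by the subsequent applications (Theorems~\ref{T2} and \ref{Cor1}, which through Proposition~\ref{ProBe} require the comparison between $\log\|\rho(\gamma)\|$ and $\lambda_1(\rho(\gamma)) - [\gamma_-,\gamma_+]$ to hold class-by-class); equivalently, one should interpret the lemma's count as that of conjugacy classes admitting an $(r,\epsilon)$-proximal representative. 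An alternative, if the strict interpretation is required, would be to show via equidistribution of the closed orbits of $\psi^t$ with respect to the measure of maximal entropy $\mu$ that the primitive representatives with $\exp[\gamma_-, \gamma_+] \leq r$ form a subsystem of strictly smaller topological entropy than $h$, which would give the same $o(\#[\Gamma]_t)$ bound.
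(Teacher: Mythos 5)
Your argument is essentially the paper's: the paper likewise reduces the lemma to Theorem~\ref{exg} plus Proposition~\ref{Popsam} after observing that every conjugacy class admits a representative with $\exp[\gamma_-,\gamma_+]>r$ --- it obtains this uniformly from cocompactness (the endpoints of a well-chosen axis are $\kappa$-separated in the Gromov metric, and then compactness and continuity of $\exp[x,y]$ produce $r$), where you invoke orbit density on $\partial^2\Gamma$, and it silently adopts the same ``some $(r,\epsilon)$-proximal representative'' reading of the count that you explicitly flag. One small correction: $[x,y]=\log\bigl(|\theta(v)|/(\|\theta\|\,\|v\|)\bigr)\le 0$ is bounded above, not unbounded above away from the diagonal; all you need is that $\{\exp[x,y]>r\}$ is a nonempty open set met by each relevant orbit, which is exactly what the paper's uniform-separation argument supplies (and which, in both proofs, implicitly restricts $r$ to be sufficiently small).
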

\begin{proof}
First of all, observe that each conjugacy class $[\cdot]\in [\Gamma]$ has a representative $\gamma$ whose end fixed points $\gamma_{-}$ and $\gamma_{+}$ are far appart (with respect to the Gromov distance $d_G$ on the boundary $\partial \Gamma$) by a positive constant $\kappa$ independant from the class $c$ (since $\Gamma$ acts cocompactly i.e. with a compact fundamental domain). 
Then, since $\xi^-$ and $\xi^+$ are uniformely continuous, by the continuity of the positive function $ \exp [x, y]$ on the compact set $\{(x,y)\in\partial \Gamma \times \partial\Gamma: d_{G}(x,y)\geq \kappa\}$,  
there exists a positive constant $r>0$ such that every conjugacy class $[\cdot]\in [\Gamma]$ can be represented by some $\gamma \in \Gamma$ with $\exp [\gamma_{-}, \gamma_{+}] > r$.
Then we have for all $t>0$, 
\begin{equation}\label{E2}
\# [\Gamma]_t
=\#\{[\gamma]\in [\Gamma]_t  : \exp [\gamma_{-}, \gamma_{+}] >r\}.
\end{equation}
By Proposition \ref{Popsam}, all $\gamma$'s in $(1)$  
(except a finit number depending only on $r$ and $\epsilon$) are $(r, \epsilon)$-proximal. Thus, using Theorem \ref{exg}, we obtain
\[
hte^{-h t}\#\{[\gamma]\in [\Gamma]_t: \rho (\gamma)\ is\ (r, \epsilon)-proximal\}\rightarrow 1,
\]as $t$ goes to infinity.
This proves the lemma.
\end{proof}
We need an extension of the above results in order to prove Theorem \ref{Cor2} (see Sect 4.). For this, fix a norm $\|\ \|_{\mathfrak{a}}$ in the subalgebra $\mathfrak{a}$ of $PGL(d, \mathbb{R})$. There is a well defined function $\mathcal{G}_{\mathfrak{a}}: \partial^2\mathcal{F}\longrightarrow \mathfrak{a}$, called the Gromov product (see \cite{sam1}) such the following holds.
\begin{pro}[Benoist \cite{Be}]\label{ProBee}
Let $r>0$ and $\delta >0$. Then there exists $\epsilon >0$ such that for every $(r, \epsilon)$-proximal transformation $g$ one has
\[
\left \|a(g)-\lambda (g) +  \mathcal{G}_{\mathfrak{a}}(g_{-}, g_{+})\right \|_{\mathfrak{a}} <\delta.
\]
\end{pro}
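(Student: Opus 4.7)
The plan is to deduce Proposition \ref{ProBee} from the rank-one version, Proposition \ref{ProBe}, by applying the latter to each of the fundamental representations of $PGL(d,\mathbb{R})$. Recall that both the Cartan projection $a(g)$ and the Jordan projection $\lambda(g)$ are encoded by the operator norms and spectral radii in the exterior powers $\Lambda^k\rho_{\mathrm{std}} : PGL(d,\mathbb{R}) \to PGL(\Lambda^k \mathbb{R}^d)$ for $k=1,\dots,d-1$: one has
\[
\log \|\Lambda^k g\| = a_1(g) + \cdots + a_k(g), \qquad \lambda_1(\Lambda^k g) = \lambda_1(g) + \cdots + \lambda_k(g).
\]
Since $\|\cdot\|_{\mathfrak{a}}$ is equivalent to any other norm on the finite-dimensional space $\mathfrak{a}$, it is enough to bound each of the $d-1$ resulting scalar differences by $\delta/C$ for a suitable norm-equivalence constant~$C$.

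First I would verify that $(r,\epsilon)$-proximality of $g$ in $PGL(d,\mathbb{R})$ propagates: there exist $r_k>0$ and, for every $\epsilon_k>0$, some $\epsilon=\epsilon(r,\epsilon_k)>0$ such that $(r,\epsilon)$-proximality of $g$ implies $(r_k,\epsilon_k)$-proximality of $\Lambda^k g$. Three ingredients enter. (i) Proximality is preserved by $\Lambda^k$: if $e^{\lambda_1(g)}$ is strictly dominant, so is $e^{\lambda_1(g)+\cdots+\lambda_k(g)}$. (ii) The attractive line $(\Lambda^k g)_+$ is the sum of the top $k$ generalized eigenlines of $g$ and the repelling hyperplane $(\Lambda^k g)_-$ is its complementary flag; hence $\exp \mathcal{G}((\Lambda^k g)_-,(\Lambda^k g)_+)$ is bounded below by a positive quantity depending only on $r$, by continuity on the compact set where $\exp \mathcal{G}(g_-,g_+)\ge r$. (iii) The contraction property in $\Lambda^k \mathbb{R}^d$ is inherited from that of $g$ on $\mathbb{R}^d$: sending the complement of a small neighbourhood of $g_-$ into a small neighbourhood of $g_+$ translates, via the Plücker embedding, to an analogous statement on the Grassmannian of $k$-planes.

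Next, the vector-valued Gromov product $\mathcal{G}_{\mathfrak{a}}$ of Sambarino \cite{sam1} is built precisely so that its $k$-th fundamental coordinate equals the scalar Gromov product $\mathcal{G}((\Lambda^k g)_-,(\Lambda^k g)_+)$ of the corresponding fundamental representation. Applying Proposition \ref{ProBe} in $PGL(\Lambda^k\mathbb{R}^d)$ with tolerance $\delta/C$ to each $\Lambda^k g$ then gives
\[
\bigl| (a(g)-\lambda(g)+\mathcal{G}_{\mathfrak{a}}(g_-,g_+))_k \bigr| < \delta/C, \qquad k=1,\dots,d-1,
\]
provided $\epsilon$ was chosen small enough (take $\epsilon$ smaller than the minimum of the thresholds required for all the $d-1$ fundamental representations). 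Summing over $k$ and invoking equivalence of norms on $\mathfrak{a}$ yields the desired bound $\|a(g)-\lambda(g)+\mathcal{G}_{\mathfrak{a}}(g_-,g_+)\|_{\mathfrak{a}}<\delta$.

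The main obstacle I expect is point (iii) above, namely the uniform quantitative propagation of the contraction condition from $\rho_{\mathrm{std}}$ to each $\Lambda^k\rho_{\mathrm{std}}$: one has to show that an $\epsilon_k$-contraction on the Grassmannian can be guaranteed by choosing the original $\epsilon$ small enough, uniformly over all $g$ with $\exp\mathcal{G}(g_-,g_+)>r$. This is ultimately a compactness argument on the flag variety, but the dependence $\epsilon\mapsto\epsilon_k$ must be tracked carefully; once this is in place, identifying the Gromov product coordinate-by-coordinate is a formal computation and the rest follows by norm equivalence.
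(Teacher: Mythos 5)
The paper itself contains no proof of Proposition \ref{ProBee}: it is quoted from Benoist \cite{Be} (via Sambarino \cite{sam1}), so the only meaningful comparison is with Benoist's original argument. Your overall strategy --- reduce to the scalar Proposition \ref{ProBe} applied in each fundamental representation $\Lambda^k$, using $\log\|\Lambda^k g\|=a_1(g)+\cdots+a_k(g)$, $\lambda_1(\Lambda^k g)=\lambda_1(g)+\cdots+\lambda_k(g)$, the fact that $\mathcal{G}_{\mathfrak{a}}$ is defined coordinatewise through the fundamental weights, and equivalence of norms on $\mathfrak{a}$ --- is exactly the right one, and is essentially how the result is proved.

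The genuine gap is your propagation step (i)--(iii): it is not merely delicate, it is false as stated. $(r,\epsilon)$-proximality of $g$ in $\mathbb{P}(\mathbb{R}^d)$ only controls the gap $\lambda_1(g)-\lambda_2(g)$ and says nothing about $\lambda_k(g)-\lambda_{k+1}(g)$ for $k\geq 2$; for instance $g=\mathrm{diag}(2,1,\dots,1)$ is $(r,\epsilon)$-proximal for suitable $r,\epsilon$, yet $\Lambda^2 g$ has top eigenvalue of multiplicity $d-1$ and is not proximal at all, so no choice of $\epsilon$ in the standard representation can force $(r_k,\epsilon_k)$-proximality of $\Lambda^k g$. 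Point (ii) fails for the same reason: the transversality of the attracting $k$-plane and the repelling $(d-k)$-plane is not controlled by $\mathcal{G}(g_-,g_+)$, which only sees the attracting line and the repelling hyperplane --- and indeed $\mathcal{G}_{\mathfrak{a}}(g_-,g_+)$ is only defined when $g_{\pm}$ are full flags in general position. The hypothesis in Benoist's lemma, and the one implicitly intended here since $\mathcal{G}_{\mathfrak{a}}$ lives on $\partial^2\mathcal{F}$, is that $g$ be $(r,\epsilon)$-proximal \emph{on the flag variety}, i.e.\ that $\Lambda^k g$ be $(r,\epsilon)$-proximal for every $k=1,\dots,d-1$ simultaneously. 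Under that hypothesis your propagation step is unnecessary and the rest of your argument (apply Proposition \ref{ProBe} with tolerance $\delta/C$ in each fundamental representation and sum) goes through. In the paper's application (Proposition \ref{Popsamb} and Theorem \ref{Cor2}) this stronger proximality is supplied by Sambarino's results for strictly convex representations, not by standard-representation proximality alone.
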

\begin{pro}[Sambarino \cite{sam1}]\label{Popsamb}
Let $\rho : \Gamma\longrightarrow PGL(d,\mathbb{R})$ be a strictly convex representation. Fix $r>0$ and $\epsilon >0$. Then the following set is finite:
\[
\{\gamma \in \Gamma : \exp (\|\mathcal{G}_{\mathfrak{a}}(\eta (\gamma_{-}), \xi (\gamma_{+}))\|_{\mathfrak{a}})>r \ and \ \rho (\gamma) \ is \ not \ (r, \epsilon)-proximal
\}.
\]
\end{pro}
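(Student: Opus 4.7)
The statement is the $\mathfrak{a}$-valued refinement of Proposition \ref{Popsam}, and my plan is to reduce it to that scalar version via a comparison of the two Gromov products. First, I would unpack from \cite{sam1} the construction of the Cartan-valued Gromov product $\mathcal{G}_{\mathfrak{a}}$: it is built from scalar Gromov-type products attached to the fundamental weights $\varpi_1,\dots,\varpi_{d-1}\in\mathfrak{a}^*$, and its $\varpi_1$-coordinate is precisely the scalar $\mathcal{G}(\eta(\cdot),\xi(\cdot))$ that already appears in Proposition \ref{Popsam}.

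Second, using Hölder continuity of $\xi$ and $\eta$ on the compact $\partial\Gamma$, together with the strict-convexity transversality $\mathbb{R}^d=\xi(y)\oplus\eta(x)$ for $x\neq y$, I would argue that on the $\kappa$-separated locus $\{(x,y)\in\partial\Gamma\times\partial\Gamma:d_G(x,y)\geq\kappa\}$ the higher-weight coordinates of $\mathcal{G}_{\mathfrak{a}}(\eta(x),\xi(y))$ remain uniformly bounded, as they are continuous functions on a compact set. By norm equivalence on the finite-dimensional space $\mathfrak{a}$, the hypothesis $\exp\|\mathcal{G}_{\mathfrak{a}}(\eta(\gamma_-),\xi(\gamma_+))\|_{\mathfrak{a}}>r$ would then force $\exp[\gamma_-,\gamma_+]>r'$ for some $r'=r'(r,\rho)>0$ whenever $\gamma$ has $\kappa$-separated fixed points; and by cocompactness of $\Gamma$, as exploited in the proof of Lemma \ref{lem1}, this separation can be assumed on each conjugacy class.

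Third, I would apply Proposition \ref{Popsam} with parameters $(r',\epsilon)$ to deduce finiteness of the set of $\gamma$ with $\exp[\gamma_-,\gamma_+]>r'$ and $\rho(\gamma)$ not $(r',\epsilon)$-proximal. A further passage from $(r',\epsilon)$- to $(r,\epsilon)$-proximality on the remaining tail would be needed, which I would control using Proposition \ref{P4}: $\lambda_1(\rho(\gamma))$ must grow along any infinite sequence, and this yields an increasing spectral gap that forces the $\epsilon$-neighborhood mapping condition automatically.

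\textbf{Main obstacle.} The delicate step is the second one. The higher-weight coordinates of $\mathcal{G}_{\mathfrak{a}}$ involve exterior powers $\wedge^{i}\mathbb{R}^d$ for which strict convexity alone supplies no equivariant boundary maps. Making rigorous sense of $\mathcal{G}_{\mathfrak{a}}(\eta(x),\xi(y))$ from the partial-flag data $(\xi,\eta)$, and then establishing that its higher-weight coordinates are uniformly bounded on the transverse locus, requires the precise construction of $\mathcal{G}_{\mathfrak{a}}$ given in \cite{sam1} and is the technical heart of the reduction.
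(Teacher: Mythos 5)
The paper never proves Proposition \ref{Popsamb}: it is quoted from \cite{sam1} and used as a black box (only Lemma \ref{lemm1} is derived from it), so your attempt has to be measured against the source rather than against anything in this text. Measured that way, the central step of your reduction does not go through. The stated hypothesis is $\exp\left(\|\mathcal{G}_{\mathfrak{a}}(\eta(\gamma_-),\xi(\gamma_+))\|_{\mathfrak{a}}\right)>r$, a \emph{lower} bound on the norm (indeed, as written it is automatic for $r<1$ since the norm is nonnegative, which already signals that one must pin down the exact form of the hypothesis in \cite{sam1} before arguing from it). A lower bound on $\|\mathcal{G}_{\mathfrak{a}}\|_{\mathfrak{a}}$ cannot yield the input Proposition \ref{Popsam} needs, namely $\exp[\gamma_-,\gamma_+]>r'$, which (because $[x,y]=\log\frac{|\theta(v)|}{\|\theta\|\|v\|}\leq 0$) is an \emph{upper} bound $|[\gamma_-,\gamma_+]|\leq\log(1/r')$; for that you would need an upper bound on the norm, and then the scalar estimate falls out of the first coordinate alone, making the whole discussion of higher-weight coordinates moot. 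Separately, the reduction to the $\kappa$-separated locus by passing to conjugates is not available here: unlike Lemma \ref{lem1}, which counts conjugacy classes, this proposition concerns individual elements $\gamma$, and neither $\mathcal{G}_{\mathfrak{a}}(\eta(\gamma_-),\xi(\gamma_+))$ nor $(r,\epsilon)$-proximality of $\rho(\gamma)$ is conjugation-invariant.

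Your third step also fails as described. Proposition \ref{P4} asserts positivity of the periods and finiteness of the exponential growth rate; it does not assert that $\lambda_1(\rho(\gamma))$ grows along every infinite sequence, and even granting a large spectral gap, the contraction clause of Definition \ref{DeBen} requires precisely the uniform transversality of $\xi(\gamma_+)$ and $\eta(\gamma_-)$ that is at issue. Moreover, for $r'<r$ the condition of being $(r',\epsilon)$-proximal is the \emph{weaker} one, so finiteness of the exceptional set for $(r',\epsilon)$ does not control the (larger) exceptional set for $(r,\epsilon)$. The route taken in \cite{sam1}, following \cite{Be}, is not a reduction to the scalar statement at all: one supposes infinitely many exceptions $\gamma_n$, observes that $|\gamma_n|\to\infty$, uses the transversality hypothesis to confine the pairs $(\eta(\gamma_{n,-}),\xi(\gamma_{n,+}))$ to a compact set of pairs in general position, and then uses the convergence-group (north--south) dynamics of $\Gamma$ on $\partial\Gamma$, transported through the equivariant maps $\xi$ and $\eta$, to conclude that $\rho(\gamma_n)$ is $(r,\epsilon)$-proximal for all large $n$. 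The obstacle you correctly flag, that strict convexity supplies only partial-flag data while $\mathcal{G}_{\mathfrak{a}}$ lives on full flags, is bypassed by that compactness argument rather than resolved by a coordinate comparison, and your proposal leaves it open.
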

Using Proposition \ref{Popsamb} and Theorem \ref{exg} we deduce the following result (with the same proof as in Lemma \ref{lem1}).
\begin{lem}\label{lemm1}
Let $\varphi \in \mathring{\mathcal{L}_{\rho}^{*}} $ and $(r, \epsilon)$ as in Proposition \ref{Popsamb}. Then,
\[
h_{\varphi}te^{-h_\varphi t}\#\{[\gamma]\in [\Gamma]_t^\varphi:  \ \rho (\gamma) \ is \ (r,\epsilon)-proximal\}
 \longrightarrow 1, 
\]as $t\rightarrow \infty$.
\end{lem}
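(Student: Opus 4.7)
The plan is to mirror verbatim the strategy of Lemma \ref{lem1}, replacing the scalar Gromov product $[\cdot,\cdot]$ by its $\mathfrak{a}$-valued refinement $\mathcal{G}_{\mathfrak{a}}$ and invoking Propositions \ref{ProBee} and \ref{Popsamb} in place of \ref{ProBe} and \ref{Popsam}, together with Theorem \ref{exgg} in place of Theorem \ref{exg}. The single point requiring verification is that, despite passing from the scalar product to the vector-valued product, one still gets a uniform lower bound on $\exp\|\mathcal{G}_{\mathfrak{a}}(\eta(\gamma_-),\xi(\gamma_+))\|_{\mathfrak{a}}$ across all conjugacy classes of $\Gamma$.

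First I would exploit cocompactness: since $\Gamma$ acts cocompactly on $\tilde M$, and hence on the space of pairs $\partial^2\Gamma$ with compact quotient, each conjugacy class $[\gamma]\in [\Gamma]$ admits a representative whose fixed points $\gamma_-,\gamma_+\in\partial\Gamma$ are separated in the Gromov metric by a constant $\kappa>0$ independent of the class. Next, I would invoke the Hölder (hence uniform) continuity of the equivariant maps $\xi:\partial\Gamma\to\mathbb{P}(\mathbb{R}^d)$ and $\eta:\partial\Gamma\to Gr_{d-1}(\mathbb{R}^d)$ from Definition \ref{def1}. The function $(x,y)\mapsto \exp\|\mathcal{G}_{\mathfrak{a}}(\eta(x),\xi(y))\|_{\mathfrak{a}}$ is then continuous and strictly positive on the compact transversal set $\{(x,y)\in\partial\Gamma\times\partial\Gamma : d_G(x,y)\geq \kappa\}$ (strict positivity coming from the transversality condition $\xi(x)\oplus\eta(y)=\mathbb{R}^d$), so it admits a uniform lower bound $r>0$ on this set.

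Consequently, for every $t>0$,
\[
\#[\Gamma]_t^{\varphi}=\#\{[\gamma]\in [\Gamma]_t^{\varphi}:\exp\|\mathcal{G}_{\mathfrak{a}}(\eta(\gamma_-),\xi(\gamma_+))\|_{\mathfrak{a}}>r\}.
\]
By Proposition \ref{Popsamb}, at most finitely many of the $\gamma$'s satisfying this lower bound fail to be $(r,\epsilon)$-proximal. Dividing and using Theorem \ref{exgg}, which supplies the asymptotic
\[
h_{\varphi}te^{-h_{\varphi}t}\#[\Gamma]^{\varphi}_t\longrightarrow 1,
\]
one concludes
\[
h_{\varphi}te^{-h_{\varphi}t}\#\{[\gamma]\in [\Gamma]_t^{\varphi}:\rho(\gamma)\text{ is }(r,\epsilon)\text{-proximal}\}\longrightarrow 1.
\]

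The only mildly delicate point, and the step I would double-check, is the strict positivity of $\exp\|\mathcal{G}_{\mathfrak{a}}(\eta(x),\xi(y))\|_{\mathfrak{a}}$ on the compact set of transversal pairs: this is guaranteed because $\mathcal{G}_{\mathfrak{a}}$ is well-defined and finite precisely on pairs of transversal flags, and the strictly convex hypothesis ensures transversality whenever $x\neq y$. Beyond this, the argument is formally identical to that of Lemma \ref{lem1}.
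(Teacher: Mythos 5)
Your proposal is correct and coincides with the paper's argument: the paper proves Lemma \ref{lemm1} by declaring it has "the same proof as in Lemma \ref{lem1}", with Proposition \ref{Popsamb} in place of Proposition \ref{Popsam} and the counting asymptotic of Theorem \ref{exgg}, which is precisely the substitution you carry out (including the cocompactness/uniform-continuity step giving the uniform lower bound on the Gromov product over a compact set of transversal pairs).
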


\section{Definitions and auxilliary results}
\subsection{Hölder continuous cocycles}
The main reference in this section is the Ledrappier's paper \cite{Led}.
\begin{deff}\label{def2}
A cocycle over $\partial \Gamma$, is a real valued function
$c:  \Gamma \times \partial \Gamma\rightarrow \mathbb{R}$,
such that, for all $\gamma_1, \gamma_2 \in \Gamma$ and $\xi \in \partial \Gamma$, 
\[
c(\gamma_1 \gamma_2, \xi)=c(\gamma_2, \gamma_1\cdot \xi)+c(\gamma_1, \xi).
\]
If, for all $\gamma \in \Gamma$, the map $\xi \rightarrow c(\gamma, \xi)$ is Hölder continuous on $\partial \Gamma$, we say that the cocycle $c$ is Hölder. The cocycle $c$ is positive if $c(\gamma, \xi)>0$, for all $(\gamma, \xi)\in   \Gamma \times \partial \Gamma$.
\end{deff}
Two Hölder cocycles are cohomologically equivalent if they differ by a Hölder continuous function $U: \partial M\rightarrow \mathbb{R}$ such that,
\[
c_1(\gamma, \xi)+c_2(\gamma, \xi)=U(\gamma \cdot \xi)-U(\xi).
\]
Given $\gamma \in \Gamma$, recall that $\gamma_+$ it's attractive fixed point in $\partial \Gamma$.
The numbers $c(\gamma, \gamma_{+})$ depend only on the conjugacy class $[\gamma]\in \Gamma$ and on the cohomological class of $c$ \cite{Led}. We call $c(\gamma, \gamma_{+})$ the periods of $c$. 
Recall the following important result by Ledrappier (see \cite{Led} p104-105).
\begin{thm}[Ledrappier \cite{Led}]\label{L}
The Liv$\check{s}$ic cohomological classes of $\Gamma$-invariant $\mathcal{C}^2$- functions $F: T^1M \rightarrow \mathbb{R}$
are in one-to-one correspondance with the cohomological classes of Hölder cocycles $c:  \Gamma \times \partial \Gamma\rightarrow \mathbb{R}$.
Moreover, the classes in correspondance have the same periods given by $c(\gamma, \gamma^{+})=\int_{p}^{\gamma p}F$, the integral of $F$ over the geodesic segment $[p, \gamma p]$ (for any $p\in M$).
\end{thm}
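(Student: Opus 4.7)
The plan is to realize the bijection by constructing the forward and backward maps explicitly, then checking compatibility with the cohomology equivalence relations and the period formula.

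For the forward direction $\tilde F\mapsto c_{\tilde F}$, fix a basepoint $p\in\tilde M$ and, for $\xi\in\partial\tilde M=\partial\Gamma$ and $q\in\tilde M$, let $\sigma_{q,\xi}$ denote the unit-speed geodesic ray from $q$ towards $\xi$. Given a $\Gamma$-invariant $C^2$ function $\tilde F$ on $T^1\tilde M$, I would define
\[
c_{\tilde F}(\gamma,\xi)\ :=\ \lim_{T\to\infty}\Bigl(\int_0^{T} \tilde F(\dot\sigma_{\gamma^{-1}p,\xi}(s))\,ds\ -\ \int_0^{T-\beta_\xi(p,\gamma^{-1}p)} \tilde F(\dot\sigma_{p,\xi}(s))\,ds\Bigr),
\]
where $\beta_\xi$ is the Busemann function based at $\xi$, used to realign the two asymptotic rays on a common horosphere. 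Because the sectional curvature is at most $-1$, the two rays meet exponentially fast, so the integrand difference decays exponentially and the limit converges. Hölder dependence in $\xi$ comes from the Hölder visual parametrization of $\partial\Gamma$ combined with the exponential contraction of the geodesic flow along unstable horospheres.

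The cocycle identity is a direct telescoping that uses the $\Gamma$-invariance of $\tilde F$. Changing the basepoint from $p$ to $p'$ modifies $c_{\tilde F}$ by a Hölder coboundary $U(\gamma\xi)-U(\xi)$, so the cohomology class of $c_{\tilde F}$ is canonical. If $\tilde F-\tilde F'$ is a Liv\v sic coboundary $\partial_t u$ along the geodesic flow, the telescoping reduces to boundary values of $u$, which is again a coboundary in $\xi$; hence Liv\v sic-cohomologous functions produce cohomologous cocycles. The period formula follows by specializing $\xi=\gamma_+$ with $p$ placed on the axis $\alpha_\gamma$ of $\gamma$: then $\gamma^{-1}p$ sits one $\gamma$-period upstream along $\alpha_\gamma$ and the regularized limit collapses to the integral over a single fundamental segment, giving $c_{\tilde F}(\gamma,\gamma_+)=\int_p^{\gamma p}\tilde F$. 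Independence from $p\in M$ then follows from the invariance of periods under Hölder coboundaries.

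For the inverse direction $c\mapsto \tilde F$, I would pass through the symbolic coding of the geodesic flow $\phi^t$ on the compact manifold $T^1M$. A Markov section gives a suspension model over a subshift of finite type whose stable/unstable coordinates parametrize the symbolic space locally by open subsets of $\partial\Gamma$, so a Hölder cocycle $c$ descends to a Hölder potential on the shift with the same closed-orbit periods as $c$; the Liv\v sic theorem for transitive Anosov flows then produces a Hölder function on $T^1M$ realizing those periods, which simultaneously yields the cohomological injectivity of the correspondence. The principal obstacle is promoting this Hölder representative to a $C^2$ one inside its Liv\v sic class, which I would handle by a smooth mollification along the flow direction whose difference with the Hölder representative is an exact flow coboundary; the thermodynamic-formalism bookkeeping is the technically delicate step and is the part where I would follow Ledrappier \cite{Led} line by line.
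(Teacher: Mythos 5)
This statement is quoted in the paper as a theorem of Ledrappier, with only a pointer to \cite{Led}; the paper contains no proof of it, so there is nothing internal to compare your argument against. Judged on its own terms, your forward direction is the standard (and, as far as the construction goes, Ledrappier's) argument: the regularized integral along asymptotic rays, realigned by the Busemann function, is exactly the $F$-weighted Busemann cocycle $B^F_\xi(\gamma^{-1}p,p)$, and your convergence, basepoint-independence, coboundary-compatibility and period computations are all sound in outline (you should still check your formula against the paper's slightly unusual cocycle convention $c(\gamma_1\gamma_2,\xi)=c(\gamma_2,\gamma_1\xi)+c(\gamma_1,\xi)$).

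The inverse direction has a genuine gap, and it sits exactly where you flagged it. Averaging a H\"older function $f$ along the flow, $f_\epsilon(x)=\tfrac{1}{\epsilon}\int_0^\epsilon f(\phi^s x)\,ds$, does change $f$ by an exact flow coboundary (with transfer function $u(x)=\tfrac{1}{\epsilon}\int_0^\epsilon\!\int_0^s f(\phi^r x)\,dr\,ds$), but $f_\epsilon$ is smooth only \emph{in the flow direction}; transversally it is still merely H\"older, so it is not $C^2$ on $T^1M$. No mollification scheme can fix this: the Liv\v{s}ic class of a generic H\"older function on an Anosov flow contains no $C^2$ representative, so either the correspondence must be stated for H\"older (or flow-differentiable) potentials --- which is what Ledrappier actually proves --- or surjectivity onto $C^2$ classes fails. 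A second, smaller gap is the claim that a H\"older cocycle on $\Gamma\times\partial\Gamma$ ``descends to a H\"older potential on the shift with the same closed-orbit periods'': the cocycle lives over the $\Gamma$-action, not over the suspension flow, and matching its periods $c(\gamma,\gamma_+)$ with Birkhoff sums over closed orbits of the coded flow is essentially the content of the theorem, so as written this step is circular. The standard repair is to bypass symbolic dynamics: use the cocycle to build a $\Gamma$-equivariant time change of the Hopf coordinates $T^1\tilde M\cong\partial^2\Gamma\times\mathbb{R}$ (as in Theorem \ref{samb} of this paper) and differentiate the time change along the flow after the flow-direction smoothing above.
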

The periods depending only on the conjugacy classe considered, we will set $\int_{p}^{\gamma p}F=\int_{[\gamma]}F$.
\subsection{A central limit theorem for Hölder continuous cocycles}
For a cocycle $c: \Gamma \times \partial \Gamma \rightarrow \mathbb{R}$ with positive periods, we set
\[
[\Gamma]_t^c=\# \{ [\gamma]\in [\Gamma]: c(\gamma, \gamma_{+}) \leq t \}.
\]
\begin{thm}\label{T1}
Let $c: \Gamma \times \partial \Gamma \rightarrow \mathbb{R}$ be a Hölder continuous cocycle with positive periods and finite exponential growth rate $h_c$,
\[
h_c := \limsup_{t \longrightarrow \infty}
\frac{ \log \#[\Gamma]_t^c}{t} 
<\infty.
\]
There exist two constants $L_c>0$ and $\sigma_c>0$ such that for any $a, b\in \mathbb{R}$ with $a<b$ we have
\[
h_cte^{-h_c t} \# \{[\gamma]\in [\Gamma]_t^c : \frac{c(\gamma, \gamma_{+}) -L_ct}{\sigma_c \sqrt{t}}\in [a,b]\}
\rightarrow
 \mathcal{N}(a, b), \ as \ t\rightarrow \infty.
\]
\end{thm}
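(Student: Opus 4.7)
My plan is to reduce Theorem \ref{T1} to a central limit theorem for periodic orbits of a transitive Anosov flow, to which Theorem \ref{sc} of Cantrell--Sharp applies directly.

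The first step is to invoke the correspondence of Ledrappier (Theorem \ref{L}), which replaces the Hölder cocycle $c$ by a cohomologous $\Gamma$-invariant Hölder function $F:T^1M\to\mathbb{R}$ having the same periods: $c(\gamma,\gamma_+)=\int_{[\gamma]}F$ for every primitive conjugacy class. Because the periods of $c$ are strictly positive, a Livšic-type argument, combining density of closed geodesics with positivity of all orbit averages of $F$, allows us to replace $F$ by a cohomologous representative that is pointwise positive on $T^1M$.

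The second step is to form the Anosov flow $\psi^t$ on $\Gamma\backslash\partial^2\Gamma\times\mathbb{R}$ obtained as the reparametrization of the geodesic flow by the positive Hölder function $F$ (Definition \ref{D1}). By construction, closed orbits of $\psi^t$ are in natural bijection with the primitive conjugacy classes $[\gamma]\in[\Gamma]$, and the $\psi^t$-period of the orbit through $[\gamma]$ equals $c(\gamma,\gamma_+)$. Hence $[\Gamma]_t^c$ is exactly the set of closed $\psi^t$-orbits of period at most $t$, and the finite exponential growth rate $h_c$ coincides with the topological entropy of $\psi^t$. Let $\mu$ denote the unique measure of maximal entropy.

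The third step is to apply the Cantrell--Sharp theorem (Theorem \ref{sc}) to the Anosov flow $\psi^t$ with the Hölder observable associated to $F$. This produces constants $L_c>0$ and $\sigma_c>0$, with
\[
L_c=\int F\,d\mu, \qquad \sigma_c^2=\lim_{T\to\infty}\frac{1}{T}\int\Bigl(\int_0^{T}F(\psi^s x)\,ds-L_c T\Bigr)^{2}d\mu(x),
\]
together with the Gaussian statement for the distribution of $(c(\gamma,\gamma_+)-L_ct)/\sigma_c\sqrt t$ over closed $\psi^t$-orbits of period at most $t$. Transporting back through the bijection between $[\Gamma]$ and closed $\psi^t$-orbits, and using the Sambarino prime-orbit asymptotic $h_c t e^{-h_c t}\#[\Gamma]_t^c\to 1$ for the normalization, yields the convergence claimed in Theorem \ref{T1}.

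The delicate technical point is the first step: turning positivity of \emph{periods} into pointwise positivity of a cohomologous Hölder representative $F$. Ledrappier's theorem only produces $F$ up to a coboundary, and pointwise positivity requires a careful Livšic-type argument using density of closed geodesic orbits together with equidistribution with respect to the measure of maximal entropy of the geodesic flow. Once this is in place, $\psi^t$ is automatically a transitive topologically mixing Anosov flow with a Gibbs measure of maximal entropy, so the hypotheses of Theorem \ref{sc} are met and the conclusion follows by direct translation.
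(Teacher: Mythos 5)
Your overall strategy coincides with the paper's: pass to the translation/reparametrized flow $\psi^t$ on $\Gamma\backslash\partial^2\Gamma\times\mathbb{R}$, identify the periods $c(\gamma,\gamma_+)$ with the periods $p(\tau)=\int_\tau F$ of its closed orbits, and then combine the Cantrell--Sharp central limit theorem (Theorem \ref{sc}) with the prime-orbit asymptotic to convert the ratio into the stated normalization. However, your first step contains a genuine gap. You claim that strict positivity of the periods of $c$, together with ``density of closed geodesics'' (or equidistribution), yields a Hölder representative $F$ that is pointwise positive on $T^1M$. This is false as stated: for a transitive Anosov flow, a Hölder function $F$ is Livšic-cohomologous to a strictly positive function if and only if $\int F\,dm>0$ for \emph{every} invariant probability measure $m$, which by the weak-$*$ density of periodic orbit measures is equivalent to the uniform bound $\inf_\tau \int_\tau F/p(\tau)>0$. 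Mere positivity of each period $\int_\tau F>0$ does not give this infimum; one can have $\int_\tau F/p(\tau)\to 0$ along a sequence of orbits, in which case some invariant measure has zero integral and no positive cohomologous representative exists. Ruling this out is precisely where the hypothesis $h_c<\infty$ (and $h_c>0$) must enter, and it is the content of Sambarino's reparametrization theorem (Theorem \ref{samb} in the paper), which the paper invokes directly to obtain the proper cocompact action, the conjugacy of the translation flow with a Hölder reparametrization of the geodesic flow, and the Anosov property. Your sketch replaces this nontrivial theorem with an argument that does not go through; you correctly flag the step as delicate, but the resolution you propose is not the right one.

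A second, smaller omission: Theorem \ref{sc} requires either non-joint-integrability of the stable and unstable foliations or the approximability condition, and you assert this is ``automatic.'' The paper justifies it by a specific argument: by Preissman's theorem the geodesic flow of a negatively curved compact manifold admits no cross section, this property is invariant under Hölder reparametrization, and the absence of a cross section for $\psi^t$ is equivalent to density in $\mathbb{R}$ of the group generated by the periods, which is what allows Theorem \ref{sc} (cf.\ Remark 6.4 of Cantrell--Sharp) to be applied. You should include this verification, and you should also note explicitly that $F$ is not a coboundary (immediate here, since its periods are nonzero), which is the remaining hypothesis of Theorem \ref{sc}.
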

The constants $L_c$ and $\sigma_c$ depend only on the cocycle $c$.
By the correspondance Theorem \ref{L} of Ledrappier, the cocycle $c$ is cohomologus to a Hölder continuous function $F_c$ with the same periods. Let $\mu_c$ be the measure of maximal entropy of the Anosov flow $\psi^t :\Gamma\backslash \partial \Gamma^2 \times \mathbb{R} \circlearrowleft$ obtained as the reparametrisation by $c$ (or $F$) of the geodesic flow on $\Gamma\backslash \partial \Gamma^2 \times \mathbb{R}$ (see Theorem \ref{samb} Sect. $5.3$). Thus, applying Theorem \ref{sc} (see Sect. $5.3$), we obtain that $L_c=\int F_c d\mu_c$ and
\[
\sigma_c =\lim_{t\rightarrow+\infty}\int \left(\int_0^{t}F_c(\psi^t (x))dt-L_ct \right )^2d\mu_c (x).
\]
\subsection{Reparametrization of a Hölder continuous cocycle}
Let $X$ be a compact metric space and $\varphi^{t}: X \rightarrow X$ a continuous flow without fixed points. We consider in this paper, Hölder continuous cocycles $c$ over the flow $\varphi^{t}$.
%\begin{definition}
A cocycle $c$ over the flow $\varphi^{t}$, is a function $c :X\times \mathbb{R} \rightarrow \mathbb{R}$ which satisfies the conditions:
\begin{itemize}
\item For all $x\in X$ and $s, t \in \mathbb{R}$,
\begin{equation}\label{E1}
c(x, s+t)=c(\varphi_t (x), s) + c(x,t),\ and
\end{equation}
\item For all $t\in \mathbb{R}$, the function $x\rightarrow c(x,t)$ is Hölder continuous (the exponent being independent from $t$).
\end{itemize}
%\end{definition}
Fundamental examples of such cocycles are given by a Hölder continuous function $F: X\times \mathbb{R \rightarrow} \mathbb{R}$ by setting,
$c_F (x,t)=\int_{0}^{t}F( \varphi_s (x) )ds$ for $t\geq 0$ and, 
$c_F (x,t)= -c_F (\varphi_t (x),-t)$, for $t<0$.

A cocycle $c$ is positive if for all $x\in X$, $c(x,t)>0$ for all $t\in \mathbb{R}$. In this case, for each $x\in X$, the function $t\rightarrow c(x,t)$ is increasing and in fact, it defines a homeomorphism of $\mathbb{R}$. It makes sens to consider the inverse cocyle $\hat{c}$:
\begin{equation}\label{E2}
\hat{c} (x, c(x,t))=c(x, \hat{c}(x,t))=t, \forall \ x\in X.
\end{equation}
\begin{deff}\label{D1}
The reparametrization of the flow $\varphi$ by the positive cocycle $c$, is the flow $\psi$ defined for all $x\in X$ by $\psi^{t}(x)= \varphi^{\hat{c}(x,t)}(x)$. The flow $\psi$ is indeed Hölder by (1) and (2). Furthermore, both share the same periodic orbits; if $p(\theta)$ is the period of the periodic $\varphi$-orbit $\theta$, then $c(x, p(\theta))$ is it's period as a periodic $\psi$-orbit for all $x\in \theta$. 
\end{deff}
Recall the following reparametrization theorem of Sambarino \cite{sam1}.
\begin{thm}[Sambarino \cite{sam1}]\label{samb}
Let $c$ be a Hölder cocycle with positive periods such that $h_c$ is finite and positive. Then the following holds.
\begin{enumerate}
\item The action of $\Gamma$ on $\partial^2\Gamma \times \mathbb{R}$,
\[
\gamma(x,y,s)=(\gamma x, \gamma y, s-c(\gamma, y)),
\]
is proper and cocompact. The translation flow $\psi^t: \Gamma \backslash\partial^2\Gamma \times \mathbb{R} \circlearrowleft$,
\[
\psi^t \Gamma(x,y,s)=\Gamma(x, y, s-t)),
\]
is conjugated to a Hölder reparametrization of the geodesic flow $\mathcal{G}^t :\Gamma \backslash T^1\tilde{M}\circlearrowleft$. Note that $\psi^t$ is a transitive Anosov flow.
\end{enumerate}
\end{thm}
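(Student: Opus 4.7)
The plan is to put $T^1\tilde{M}$ into Hopf coordinates via the endpoint map, identifying it with $\partial^2\tilde{M}\times\mathbb{R}$ so that the geodesic flow becomes the translation $(x,y,s)\mapsto(x,y,s-t)$ and the $\Gamma$-action takes the form $\gamma(x,y,s)=(\gamma x,\gamma y,s-B(\gamma,y))$, where $B$ is the Busemann cocycle attached to a fixed basepoint. Since $\Gamma$ acts freely, properly and cocompactly on $\tilde{M}$, it does so on $T^1\tilde{M}$; hence the Busemann-twisted action on $\partial^2\tilde{M}\times\mathbb{R}$ is proper and cocompact, and $\Gamma\backslash T^1\tilde{M}$ is its quotient.

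To replace $B$ by $c$, I would invoke Ledrappier's correspondence (Theorem \ref{L}) to obtain a Hölder function $F:T^1M\to\mathbb{R}$ whose periods along closed geodesics match those of $c$. Positivity of the periods together with $h_c>0$ allow, by a Liv\v{s}ic-style argument, adjusting $F$ by a geodesic-flow coboundary so that $F>0$ without changing its periods. Integrating $F$ along forward asymptotic geodesics on $T^1\tilde{M}$ produces a Hölder cocycle $B_F$ on $\Gamma\times\partial\Gamma$ with the same periods as $c$, so by the cocycle-uniqueness part of Ledrappier's correspondence, $c$ and $B_F$ are cohomologous: there is a Hölder $v:\partial\Gamma\to\mathbb{R}$ with
\[
c(\gamma,y)-B_F(\gamma,y)=v(\gamma y)-v(y).
\]

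Setting $\Phi(x,y,s)=(x,y,s+v(y))$, this cohomology relation is precisely the intertwining condition between the $c$-action and the $B_F$-action; consequently the $c$-action inherits propriety and cocompactness from the Busemann side, and $\Phi$ descends to a Hölder homeomorphism carrying the translation flow $\psi^t$ to the $F$-reparametrization of the geodesic flow on $\Gamma\backslash T^1\tilde{M}$ in the sense of Definition \ref{D1}. Since the geodesic flow on a compact quotient of curvature $\leq -1$ is a transitive Anosov flow and Hölder reparametrizations preserve both transitivity and the Anosov property (only the parametrization along the stable and unstable foliations changes), $\psi^t$ is transitive Anosov as well. The main technical obstacle will be the cohomological step: producing the Hölder Liv\v{s}ic transfer function $v$ on $\partial\Gamma$ jointly with upgrading $F$ to a strictly positive representative. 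This amounts to a boundary Liv\v{s}ic theorem, requiring symbolic-coding or distortion estimates adapted to the Gromov-hyperbolic action of $\Gamma$ on $\partial\Gamma$; once $v$ is in hand, all remaining assertions are formal.
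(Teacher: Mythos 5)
The paper offers no proof of this statement: it is quoted from Sambarino \cite{sam1}, and the only in-paper material is the description, immediately after the theorem, of the conjugating map $E(p,v)=(v_{-},v_{+},B^F_{v_{+}}(p,o))$. Your outline reproduces the route of the cited source --- Hopf coordinates, Ledrappier's correspondence, a positive representative $F$, and the intertwining shift $\Phi(x,y,s)=(x,y,s+v(y))$ --- and the intertwining computation you describe is correct, so I regard the sketch as sound. Two small corrections. The upgrade of $F$ to a strictly positive representative is driven by the \emph{finiteness} of $h_c$, not by $h_c>0$: it is $h_c<\infty$ that forces the periods $c(\gamma,\gamma_{+})$ to grow at least linearly in the translation length of $\gamma$ (otherwise $\#[\Gamma]^c_t$ would grow superexponentially), and that uniform lower bound on $\int_{\tau}F/p(\tau)$ is what the positive Liv\v{s}ic lemma needs. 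Second, the ``boundary Liv\v{s}ic theorem'' you flag as the main remaining obstacle is exactly the injectivity half of Ledrappier's correspondence (Theorem \ref{L}), which the paper already quotes: two Hölder cocycles with equal periods are cohomologous via a Hölder transfer function on $\partial\Gamma$, so nothing beyond the quoted results is required to produce $v$.
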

By Ledrappier's Theorem \ref{L}, one can set $c=c_F$ for some Hölder continuous function $F$.
The reparametrization in the above theorem means that there exists a $\Gamma$-equivariant homeomorphism $E : T^1\tilde{M} \longrightarrow \partial^2\Gamma \times \mathbb{R}$ such that, for all $x=(p, v)\in T^1\tilde{M}$
%if let $\xi_{-}$ and $\xi_{+}$ the end points of the geodesic with origine at $(p, v)\in T^1\tilde{M}$
\[
E(\mathcal{G}^{t} (p,v))=\psi^{c(x,t)}(E(p,v)),
\]where $c(x,t)=\int_0^t F(\mathcal{G}^s(x))ds$. It was proved in \cite{sam1} that $E$ is given by
\[
E(p,v)=(v_{-}, v_{+}, B_{v_{+}}^F)(p,o),
\]
where $o\in \tilde{M}$ is a fixed point (a base point), $v_{-}$ and $v_{+}$ are the end points in $\partial \tilde{M}$ of the geodesic with origine at $(p, v)\in T^1\tilde{M}$, and $\partial \tilde{M}\ni\xi \rightarrow B_{\xi}^F$ is the Busemann function based on the function $F$ (see \cite{sam1}).

Setting $F=1$ leads to the Hopf parametrization of the unit tangent bundle (based on the Busemann cocycle $B_{v_{+}}$),
\[
T^1\tilde{M}\ni(p,v)\longrightarrow (v_{-}, v_{+}, B_{v_{+}}(p,o)) \in \partial^2\Gamma \times \mathbb{R}.
\]
The geodesic flow of $T^1\tilde{M}$ is, by the way, a translation flow on $\partial^2\Gamma \times \mathbb{R}$,
\[
\mathcal{G}^t (v_{-}, v_{+}, B_{v_{+}}(p,o))=(v_{-}, v_{+}, B_{v_{+}}(p,o)+t).
\] 
\subsection{A central limit theorem for hyperbolic flows}
The geodesic flow of a negatively curved compact manifold does not admit a cross section. This a consequence of the Preissman theorem \cite{pr} (see \cite{sam3}).This means essentially that the flow $\phi^t$ is not a suspension of a continuous flow. Recall that a cross section for a flow $\phi^t : X\circlearrowleft$ is a closed subset $K$ of $X$ such that the function $K\times \mathbb{R} \ni (x,t)\rightarrow \phi^t (x)$ is a surjective local homeomorphism. Moreover, this property is invariant under reparametrization \cite{sam1}. More precisely, the flow $\phi^t$ admits a cross section if and only if the same is true for any reparametrization $\psi^t$ of $\phi^t$. Consequently, the flow $\psi^t$ of Theorem \ref{samb} does not admit a cross section. This is equivalent to say that the subgroup of $\mathbb{R}$ generated by the periods of $c=c_F$ (i.e. the subgroup generated by $\{\int_{\tau}F: \tau \ periodic\}$) is dense \cite{sam3}. We can thus apply the following result of Cantrell and Scharp \cite{cs} (see also \cite{cs}, Remark 6.4 ) to the periodic orbits $\tau$ of the flow $\psi^t$.
\begin{thm}[Cantrell-Scharp \cite{cs}]\label{sc}
Suppose that $\psi^t : \Lambda \longrightarrow \Lambda$ is either a transitive Anosov flow with stable and unstable foliations which are not jointly integrable or a hyperbolic flow satisfying the approximability condition. Let $f:\Lambda \rightarrow\mathbb{R}$ be a Hölder continuous function that is not a coboundary. Then, there exists two constants $L$ and $\sigma_f >0$ such that, for all $a,b\mathbb{R}$,
\[
\frac{\#\{\tau \ periodic,\ p(\tau)\leq t: 
\frac{\int_{\tau}f-Lt}{\sigma_f\sqrt{t}}\in [a, b]\}}{\#\{\tau \ periodic,\ p(\tau)\leq t\}}\longrightarrow \mathcal{N}(a, b), 
\]as $t\rightarrow \infty$.
\end{thm}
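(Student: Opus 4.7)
The plan is to reduce the problem to the symbolic thermodynamic formalism of a subshift of finite type and then extract the Gaussian limit from a two-parameter family of Ruelle transfer operators through Dolgopyat-type resolvent estimates combined with a Tauberian theorem.

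First, I would invoke the existence of a Markov coding: by Bowen's construction for Anosov flows, respectively by Ratner's sections under the approximability hypothesis, there exist a two-sided subshift of finite type $(\Sigma,\sigma)$ and a strictly positive Hölder roof function $r:\Sigma\to\mathbb{R}$ such that $\psi^t$ is topologically semiconjugate, up to a boundary set that is irrelevant for periodic orbit counting, to the suspension flow over $(\Sigma,\sigma)$ with roof $r$. Under this coding, the observable $f$ descends to the Hölder function $\bar f(x)=\int_0^{r(x)} f(\psi^u \tilde x)\,du$ on $\Sigma$; closed $\psi$-orbits $\tau$ with $p(\tau)\leq t$ correspond with finite multiplicity to periodic orbits $\{x,\sigma x,\ldots,\sigma^{n-1}x\}$ of $\sigma$ with Birkhoff sum $r^n(x)\leq t$, and one has $\int_\tau f=\bar f^n(x)$.

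Second, I would analyse the complex family of twisted Ruelle transfer operators on a space of Hölder functions,
\[
(\mathcal{L}_{s,\xi} w)(x)=\sum_{\sigma y=x} e^{-s\,r(y)+i\xi\,\bar f(y)}\,w(y).
\]
For $(s,\xi)$ in a neighbourhood of $(h,0)$, where $h$ is the topological entropy of $\psi^t$, Ruelle--Perron--Frobenius theory supplies a simple isolated leading eigenvalue $\lambda(s,\xi)$ depending analytically on both parameters, with $\lambda(h,0)=1$. The implicit equation $\lambda(s(\xi),\xi)=1$ defines an analytic function $s(\xi)$ near $0$ whose Taylor expansion $s(\xi)=h-iL\xi-\tfrac12 \sigma_f^{\,2}\,\xi^{2}+O(\xi^3)$ reads off $L$ and $\sigma_f^{\,2}$. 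The hypothesis that $f$ is not a coboundary, combined with Livšic's theorem applied to $\bar f - L r$, ensures that $\sigma_f^{\,2}>0$. Then, encoding closed orbits in the Dirichlet-type series
\[
\eta(s,\xi)=\sum_{n\geq 1}\frac{1}{n}\sum_{\sigma^n x=x} e^{-s\,r^n(x)+i\xi\,\bar f^n(x)},
\]
whose singularities in $s$ are governed by $\lambda(s,\xi)=1$, and applying an Ikehara--Wiener Tauberian theorem, one obtains
\[
\sum_{p(\tau)\leq t} e^{i\xi\int_\tau f}\sim \frac{e^{s(\xi)\, t}}{s(\xi)\, t}\qquad(t\to\infty).
\]
Setting $\xi=u/(\sigma_f\sqrt{t})$ and using the expansion of $s(\xi)$, the characteristic function of the empirical distribution of $(\int_\tau f-Lt)/(\sigma_f\sqrt{t})$ over $\{\tau : p(\tau)\leq t\}$ converges pointwise to $e^{-u^2/2}$, and Lévy's continuity theorem yields convergence to $\mathcal{N}(a,b)$.

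The main obstacle will be the uniform analytic control needed to justify the contour shift in the Tauberian step: one must exclude the existence of eigenvalues $\lambda(s,\xi)=1$ on the critical vertical line $\Re s=h$ for $\xi$ small and nonzero, and obtain polynomial bounds on $\|(I-\mathcal{L}_{s,\xi})^{-1}\|$ as $|\Im s|\to\infty$. This is precisely where the dichotomy in the hypothesis enters: the non-joint-integrability of the stable and unstable foliations for a transitive Anosov flow, respectively the approximability condition for a hyperbolic flow, yields the Dolgopyat oscillatory cancellation estimates for the twisted operators $\mathcal{L}_{s,\xi}$ and produces a uniform spectral gap on vertical strips. Without these estimates the passage from the local spectral picture near $(h,0)$ to the uniform asymptotic for the twisted orbit counts is unavailable, and the renormalised central limit behaviour cannot be extracted.
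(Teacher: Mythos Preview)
The paper does not prove this statement at all: Theorem~\ref{sc} is quoted verbatim from Cantrell--Sharp \cite{cs} and used as a black box to deduce Theorem~\ref{T1} and, through it, the main results. There is therefore no ``paper's own proof'' to compare your proposal against.

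That said, your outline is a faithful sketch of the strategy actually carried out in \cite{cs}: Markov coding of the flow by a subshift of finite type with H\"older roof, analysis of the two-parameter family of twisted transfer operators $\mathcal{L}_{s,\xi}$, extraction of $L$ and $\sigma_f^2$ from the Taylor expansion of the implicit abscissa $s(\xi)$, a Tauberian step to get the asymptotic of the twisted orbit count, and L\'evy's continuity theorem to conclude. Your identification of the crucial obstacle is also correct: the passage from the local spectral picture to uniform control on the critical line is exactly where the non-joint-integrability (respectively the approximability condition) enters, via Dolgopyat-type estimates. One small caution: the Ikehara--Wiener theorem in its classical form needs the relevant series to be nonnegative, so in practice the Tauberian input for the twisted sum is handled either by a complex Tauberian/contour-shift argument that uses the Dolgopyat polynomial bounds directly, or by bounding real and imaginary parts against untwisted counts; you should be explicit about which route you take when you write this up in full.
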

In the theorem, $p(\tau)$ is the least period of the periodic orbit $\tau$. Let $\mu$ be the measure of maximal entropy of the flow $\psi^t$. The constants $L$ and $\sigma_f$ are given respectively by $L=\int fd\mu$ and
\[
\sigma_f =\lim_{t\rightarrow+\infty}\int_{\Lambda} \left(\int_0^{t}f(\psi^t (x))dt-t\int fd\mu \right )^2d\mu (x).
\]
Note that this theorem is more general than Theorem \ref{T1}, in the sens that we have not necessarily $p(\tau)=\int_\tau f$ for the periodic orbits $\tau$ of $\psi^t$.  

\section{Proof of the main results}
\subsubsection{Proof of Theorem \ref{T1}}
\begin{proof}
Under the assumptions of Theorem \ref{T1} ($c$ Hölder continuous and $0<h_c <\infty$), one can apply the reparametrizing Theorem \ref{samb}. We have then a proper and cocompact action $\Gamma \curvearrowright \backslash \partial^2 \Gamma \times \mathbb{R}$
\[
\gamma (x,y,s)=(\gamma x, \gamma y, s-c(\gamma, y)),
\]
and a translation flow on the quotient space, $\psi^{t}: \Gamma \backslash \partial^2 \Gamma \times \mathbb{R} \circlearrowleft$
\[
\psi^{t} \Gamma(x,y,s)=\Gamma (x,y, s-t).
\]
For $\gamma \in \Gamma$ primitive, the periods $c(\gamma, \gamma^+)$ of $c$ are the periods $p(\tau)$ of the periodic orbits $\tau$ of $\psi^{t}: \Gamma \backslash \partial^2 \Gamma \times \mathbb{R} \circlearrowleft$. %Indeed, 

Let $s\rightarrow \tau (s)=\Gamma \cdot (\gamma_{-}, \gamma_{+},s)$ a periodic orbit of $\psi^t$, i.e. the lift of $\tau$ to $\partial^2 \Gamma \times \mathbb{R}$, is $\tau=(\gamma_{-}, \gamma_{+}, s)$, with $\gamma \in \Gamma$ primitive and $s\in \mathbb{R}$  (we denote the orbit and the lifts by the same symbol if there is no confusion to be worried about). Since $\gamma (\gamma_{-}, \gamma_{+}, s)=(\gamma_{-}, \gamma_{+}, s-c(\gamma, \gamma^+))$, we have $p(\tau)=c(\gamma, \gamma^+)$. This is a straightforward verification by observing that, for all $s\in \mathbb{R}$,
\[
\Gamma \cdot (\gamma_{-}, \gamma_{+}, s-c(\gamma, \gamma_{+}))= \Gamma \cdot \gamma (\gamma_{-}, \gamma_{+}, s)=\Gamma \cdot (\gamma_{-}, \gamma_{+}, s)=\tau (s)=\tau (s-p(\tau))
\]
\[=\Gamma \cdot (\gamma_{-}, \gamma_{+}, s-p(\tau)).
\]
The flow $\psi^{t}$ is the reparametrization of the geodesic flow by a Hölder continuous cocycle $c_F$, with $F>0$,
and we have by Theorem \ref{L},
\begin{equation*}\label{éq}
p(\tau)=c(\gamma, \gamma^+)=c_F (\gamma, \gamma^+)=\int_{\tau}F.
\end{equation*}
Then, 
\begin{eqnarray*}\label{EQ}
&&\frac{\#\{[\gamma] \in [\Gamma]_t^c: 
\frac{c(\gamma, \gamma_{+})-Lt}{\sigma\sqrt{t}}\in [a, b]\}}
{\#[\Gamma]_t^c}\\
&=&
\frac{\#\{\tau \ periodic, \ p(\tau)\leq t: 
\frac{\int_{\tau}F-Lt}{\sigma\sqrt{t}}\in [a, b]\}}{\#\{\tau \ periodic, \ p(\tau)\leq t\}}.
\end{eqnarray*}
Consequently, Theorem \ref{T1} is now a consequence of Thoerem \ref{exg} and Theorem \ref{sc}.
\end{proof}

\subsubsection{Proof of Theorem \ref{T2} and Theorem \ref{T}}
\begin{proof}
The periods of $\beta_1$ are positive, $h_{\beta_{1}} <\infty$ and $\varphi \in \mathring{\mathcal{L}_{\rho}} $
$\lambda_1 (\rho(\gamma))=\beta_1 (\gamma, \gamma^+)$. Then Theorem \ref{T2} is a consequence of Theorem \ref{T1} (with $c=\beta_1$). 
The same holds for Theorem \ref{T}. Indeed, the numbers $\varphi (\lambda (\rho\gamma))$, for $\varphi \in \mathring{\mathcal{L}_{\rho}}$, are the positive periods of a cocycle $\beta_\varphi$ with $h_{\beta_{\varphi}}<\infty$.
\end{proof}
\subsubsection{Proof of Theorem \ref{Cor1}}
\begin{proof}
Set
\[
\lambda_t(\gamma) = \frac{\lambda_1 (\rho(\gamma)) -Lt}{\sigma \sqrt{t}}, \ and  \ \delta_t (\gamma) = \frac{\ln \|\rho(\gamma)\| -\lambda_1(\rho (\gamma))}{\sigma \sqrt{t}}.
\]
Recall that by Theorem \ref{T2}, $hte^{-ht} 
\# \{ [\gamma]\in [\Gamma]_t: \lambda_t(\gamma)\in [a,b\}]$ converges to $\mathcal{N}(a, b)$.
We have to show that,
\[
hte^{-ht} 
\# \{ [\gamma]\in [\Gamma]_t: \lambda_t(\gamma)+\delta_t(\gamma)\in [a,b\}]\rightarrow \mathcal{N}(a,b),
\] as $t\rightarrow \infty$.

Choose $\delta, r$ and $\epsilon$ as in Proposition \ref{ProBe}.
By Proposition \ref{Popsam}, for all, but a finite number of the $\gamma$'s, are $(r, \epsilon)$-proximal $\gamma$ and,
\[
|\log\|\rho(\gamma)\| -\lambda_1(\rho (\gamma))+\log r|\leq 2\delta.
\]
This implies that for $t$ sufficiently large (depending on $\delta, r$ and $\sigma$) we have $|\delta_t(\gamma)|\leq \delta$, and consequently, 
\[
\# \{ [\gamma]\in [\Gamma]_t: |\delta_t(\gamma)|\leq \delta\}\geq
\# \{ [\gamma]\in [\Gamma]_t: \gamma \ is \ (r, \epsilon)-proximal\}.
\]
Thus by Lemma \ref{lem1},
\begin{equation}\label{E4}
hte^{-ht} 
\# \{ [\gamma]\in [\Gamma]_t: |\delta_t(\gamma)|\leq \delta\} \longrightarrow 1,
\ as \ t \rightarrow \infty.
\end{equation}
Now, by (\ref{E4}) we get,
\begin{eqnarray*}
&& \liminf_{t\rightarrow \infty}
hte^{-ht} 
\# \{ [\gamma]\in [\Gamma]_t: \lambda_t(\gamma)+\delta_t (\gamma)\in [a,b\}] \\
&\geq & \liminf_{t\rightarrow \infty}hte^{-ht} 
\# \{ [\gamma]\in [\Gamma]_t: \lambda_t(\gamma)
\in 
[a+\delta, b-\delta] \ and \ |\delta_t(\gamma)|
\leq \delta\}\\
&=& \liminf_{t\rightarrow \infty}hte^{-ht} 
\# \{ [\gamma]\in [\Gamma]_t: \lambda_t(\gamma)
\in [a+\delta, b-\delta]\}\\
&=& \mathcal{N}(a+\delta,b-\delta).
\end{eqnarray*}
For the $\limsup$ we have (using (\ref{E4}))
\begin{eqnarray*}
&& \limsup _{t\rightarrow \infty}hte^{-ht} 
\# \{ [\gamma]\in [\Gamma]_t: \lambda_t(\gamma)+\delta_t (\gamma)\in [a,b\}]\\
&=& \limsup _{t\rightarrow \infty}hte^{-ht} 
hte^{-ht}\# \{ [\gamma]\in [\Gamma]_t: \lambda_t(\gamma)
\in [a+\delta, b-\delta] \ and \ |\delta_t(\gamma)|
\leq \delta\}\\
&\leq &  \limsup _{t\rightarrow \infty}hte^{-ht} 
hte^{-ht}\# \{ [\gamma]\in [\Gamma]_t: \lambda_t(\gamma)
\in [a+\delta, b-\delta]\}\\
&=& \mathcal{N}(a+\delta,b-\delta).
\end{eqnarray*}
Thus, since $\delta$ is arbitrary, we finally get,
\[
\lim _{t\rightarrow \infty}hte^{-ht} 
\# \{ [\gamma]\in [\Gamma]_t: \lambda_t(\gamma)+\delta_t (\gamma)\in [a,b\}]=\mathcal{N}(a,b).
\]
This completes the proof of Theorem \ref{Cor1}.
\end{proof}
\subsubsection{Proof of Theorem \ref{Cor2}}
\begin{proof}
Fix $\delta >0$, $\epsilon >0$ and $r>0$ as in the proof of Proposition \ref{ProBee}. Then, for any $(r, \epsilon)$-proximal $\rho(\gamma)$ we have
\[
|\varphi(a(\rho(\gamma))) -\varphi(\lambda(\rho(\gamma))) +\varphi(\mathcal{G}_{\mathfrak{a}}(\eta (\gamma_{-}), \xi (\gamma_{+}))| \leq \|\varphi\|\delta.
\]
By Proposition \ref{Popsamb}, the set of $\gamma$'s with 
\[
\exp \varphi(\mathcal{G}_{\mathfrak{a}}(\eta (\gamma_{-}), \xi (\gamma_{+}))) >r
\]
and such that $\rho (\gamma)$ is not $(r, \epsilon)$-proximal is finite.
Use Lemma \ref{lemm1}, and proceed as in the above proof of Theorem \ref{Cor1} to prove Theorem \ref{Cor2}.
\end{proof}

\end{document}